\newtheorem{defi}{Definition} 
\newtheorem{thm}[defi]{Theorem}
 \newtheorem{prop}[defi]{Proposition}
\newtheorem{lemme}[defi]{Lemma}
\newtheorem{cor}[defi]{Corollary}
\newcommand{\matrice}{\begin{pmatrix}}
\newcommand{\ok}{\end{pmatrix}}
\newcommand{\dmatrice}{\begin{vmatrix}}
\newcommand{\dok}{\end{vmatrix}}
\newcommand{\twosystem}[2]{\left\{\begin{aligned} &#1\\ &#2\end{aligned}\right.}
\newcommand{\threesystem}[3]{\left\{ \begin{aligned}&#1\\ &#2\\&#3\end{aligned}\right.}
\newcommand{\nero}{\smallskip$\bullet\quad$\rm}
\newcommand{\parte}[1]{\smallskip\noindent {\rm#1)}\,\,}
\newcommand{\scal}[2]{\langle{#1},{#2}\rangle}
\newcommand{\abs}[1]{\lvert{#1}\rvert}
\newcommand{\reals}{{\bf R}}
\newcommand{\sphere}[1]{{\bf S}^{#1}}
\newcommand{\real}[1]{{\bf R}^{#1}}
\newcommand{\bd}{\partial}
\newcommand{\derive}[2]{\dfrac{\bd #1}{\bd#2}}
\newcommand{\deriven}[3]{\dfrac{\bd^{#1} #2}{\bd #3^{#1}}}
\newcommand{\level}[2]{#1^{-1}(#2)}
\begin{document}

\title{Geometric rigidity of constant heat flow
\footnote{Classification AMS $2000$: 58J50, 35P15 \newline 
Keywords: Heat equation, constant flow property, overdetermined problems, isoparametric tubes}}

\author{Alessandro Savo}
\date{}

\maketitle
\begin{abstract} 
Let $\Omega$ be a compact Riemannian manifold with smooth boundary and let $u_t$ be the solution of the heat equation on $\Omega$, having constant unit initial data $u_0=1$
and Dirichlet boundary conditions ($u_t=0$ on the boundary, at all times). If at every time $t$ the normal derivative of $u_t$ is a constant function on the boundary, we say that $\Omega$ has the {\it constant flow property}. This gives rise to an overdetermined parabolic problem, and our aim is to classify the manifolds having this property. In fact, if the metric is analytic, we prove that 
$\Omega$ has the constant flow property if and only if it  is an {\it isoparametric tube}, that is, it is a solid  tube of constant radius around a closed, smooth, minimal submanifold, with the additional property that all equidistants to the boundary (parallel hypersurfaces) are smooth and have constant mean curvature. 
Hence,  the constant flow property can be viewed as an analytic counterpart to the isoparametric property.  
Finally, we relate the constant flow property with other overdetermined problems, in particular, the well-known Serrin problem on the mean-exit time function, and discuss a counterexample involving minimal free boundary immersions into Euclidean balls. 
\end{abstract}
\large

\section{Main results}

In Riemannian geometry, an overdetermined problem gives rise to the following question: is it possible to identify the geometry of a domain $\Omega$ in a Riemannian manifold assuming the existence of a solution $u$ of a certain PDE such that both $u$ and its normal derivative are constant on the boundary of $\Omega$ ?  
Perhaps the most famous example of overdetermined problem is the so-called {\it Serrin problem} :

\begin{equation}\label{serrin}
\twosystem
{\Delta v=1\quad\text{on}\quad \Omega,}
{v=0,\, \derive v{\nu}={\rm const.}\quad\text{on}\quad \bd\Omega.}
\end{equation}

J. Serrin celebrated rigidity result \cite{Ser} states that the only compact Euclidean domains supporting a solution to \eqref{serrin} are Euclidean balls.  Another famous problem  is the so-called {\it Schiffer problem} 
\begin{equation}\label{schiffer}
\twosystem
{\Delta u=\lambda u\quad\text{on}\quad\Omega}
{u={\rm const}, \, \derive u{\nu}=0\quad\text{on}\quad\bd\Omega;}
\end{equation}
the Schiffer conjecture states that the only compact Euclidean domains supporting a non-trivial solution to \eqref{schiffer} for some eigenvalue $\lambda>0$ are balls. It is well-known that proving this conjecture is equivalent to solving the famous Pompeiu problem (see \cite{BS}, \cite{Wil}).  
Only partial solutions are known (among them, see \cite{BY}) and we refer the reader to the papers \cite{Ber}, \cite{Ber2}, \cite{BS} for related results.
We remark that not much is known about these problems for domains in a general Riemannian manifold.

The study of overdetermined problems is a very active and interesting field of research, lying at the border between geometry and analysis;   for an overview, see for example \cite{Shk}, and then  \cite{Cha}, \cite{ESI}, \cite{FMW}, \cite{GS}, \cite{KN}, \cite{Liu}, \cite{Mol}, \cite{S3}, \cite{Sch}, \cite{Sou}, \cite{Wei}, \cite{Wil}, \cite{Wil2}; for problems in Riemannian manifolds see for example \cite{FM} and \cite{FMV}. We stress that we assume compactness of $\Omega$ in this paper.  The non-compact situation (for example, exterior domains in Euclidean space) is quite rich and interesting, and we refer for example to \cite{dPPW}, \cite{RosSic} and the  preprint  \cite{RRS}. 
The list is very incomplete, due to the many interesting contributions to this  problem since Serrin's seminal paper \cite{Ser}. 

\smallskip

In this paper we classify  compact Riemannian manifolds with analytic metric and smooth boundary satisfying a certain overdetermined problem for the heat kernel (defined in  \eqref{overdet}) : we show that the class of such manifolds (which are said to have the {\it constant flow property}) coincides with the class of the so-called {\it isoparametric tubes} (see Definition \ref{iso}). This generalizes to Riemannian manifolds the results of \cite{S3}, obtained in the standard sphere. Thus, this is one case in which it is possible to give a precise description, in the general Riemannian setting, of the geometry of manifolds supporting a solution to the given overdetermined problem, so that the constant flow property \eqref{overdet} could be seen as an analytic counterpart to the isoparametric property, very much studied in differential geometry.  

\smallskip

Let us see the contents of this introduction. In Section \ref{cfp} we define and discuss the overdetermined problem at hand and the class of isoparametric tubes, while in Section \ref{previousresults} we recall the main results from \cite{S3} . In Section \ref{mainresult} we state our main equivalence result and in Section \ref{cfphar} we recall that if a manifold has the constant flow property  then it satisfies also the classical Serrin problem \eqref{serrin} ; then, we prove that the converse does not hold for the class of (minimal) free boundary immersion into a Euclidean $3$-ball having more than $2$ boundary components. 

%%%%%

\subsection{The constant flow property}\label{cfp}

Let $(\Omega^n,g)$ be a compact Riemannian manifold of dimension $n$ with smooth boundary $\bd\Omega$.  Consider the solution $u=u(t,x):[0,\infty)\times \Omega\to \reals$ of the heat equation on $\Omega$ with initial data $1$ and Dirichlet boundary conditions:
\begin{equation}\label{temp}
\threesystem
{\Delta u+\derive ut=0\quad\text{on}\quad \Omega,}
{u(0,x)=1\quad\text{for all}\quad x\in\Omega,}
{u(t,y)=0\quad\text{for all $y\in\bd\Omega$ \, and $t>0$},}
\end{equation}
where $\Delta$ is the Laplace-Beltrami operator defined by the Riemannian metric $g$ and acting on the space variable $x$. We will often write $u(t,x)$ as  $u_t(x)$ so that $u_0=1$. The function $u$ is a basic object in heat diffusion : in fact it can be written
\begin{equation}\label{hk}
u(t,x)=\int_{\Omega}k(t,x,y)dy,
\end{equation}
where $k: (0,\infty)\times\Omega\times\Omega\to\reals $ is the heat kernel of $\Omega$ (that is, the fundamental solution of the heat equation with Dirichlet boundary conditions). About  the physical meaning, $u(t,x)$ is the temperature at time $t$, at the point $x\in\Omega$, assuming that the initial temperature distribution is constant, equal to $1$, and that the boundary $\bd\Omega$ is subject to absolute refrigeration. 

Now let $\nu$ be the unit normal vector field of $\bd\Omega$, pointing inward, and let $y\in\bd\Omega$.  Then, $\derive{u}{\nu}(t,y)$ can be interpreted as the {\it heat flow} at time $t$, at the boundary point $y$.  A complete asymptotic expansion for the heat flow $\derive{u_t}{\nu}$ at any fixed boundary point has been obtained in \cite{S2} (this result was used in the paper \cite{S3}). 

\medskip

\begin{defi}\label{chf} We say that $\Omega$ has the {\rm constant flow property} if, for all fixed  $t>0$, the heat flow
$$
\derive{u}{\nu}(t,\cdot):\bd\Omega\to\reals
$$ 
is a constant function on $\bd\Omega$.
\end{defi}

In other words, a manifold has the constant flow property if and only if it supports a solution to the following overdetermined problem:
\begin{equation}\label{overdet}
\threesystem
{\Delta u+\derive ut=0\quad\text{on}\quad \Omega,}
{u(0,x)=1\quad\text{for all}\quad x\in\Omega,}
{u(t,y)=0, \, \derive u{\nu}(t,y)=c(t)\quad\text{for all $y\in\bd\Omega$ \, and $t>0$},}
\end{equation}
where $c(t)$ is a function depending only on $t$. 
Manifolds with the constant flow property are {\it perfect heat diffusers}, as defined in the introduction of \cite{S3} (see Theorem 9 in \cite{S3} for a characterization in terms of the heat content with zero mean boundary data). 

\smallskip

In this paper,  assuming that $\Omega$ is analytic, we will show the equivalence of this property with the following geometric property.

\begin{defi}\label{iso}  We say that the compact manifold with boundary  $\Omega$ is a {\rm smooth tube around $P$} if there exists a smooth, closed submanifold $P$ of $M$ and a number $R>0$ such that:
\item a) $\Omega$ is the set of points at distance at most $R$ from $P$,

\item b) For each $s\in (0,R]$, the equidistant
$$
\Sigma_s=\{x\in \Omega: d(x,P)=s\}
$$
is a smooth hypersurface of $\Omega$. 

\smallskip

We say that the smooth tube $\Omega$ is an {\rm isoparametric tube} if every equidistant $\Sigma_s$ as above has contant mean curvature.
\end{defi}

\nero The submanifold $P$ is called the {\it soul} of $\Omega$, and can have dimension 
$\dim P=0,\dots,n-1$. The soul is then  an embedded submanifold.

\nero For example, a solid revolution  torus in $\real 3$ with radii $a>b>0$ is a smooth tube (the soul $P$ is a circle), but is not an isoparametric tube because equidistants have variable mean curvature. In fact, the only (compact) isoparametric tubes in Euclidean spaces are the balls, in which case $P$ reduces to a point.  This follows from the general fact that the soul of an isoparametric tube is always a minimal submanifold (see Theorem \ref{min} below)  and in Euclidean space the only compact minimal submanifolds are points. 

\nero We don't assume that the boundary of an isoparametric tube $\Omega$  is connected. In fact, it is easy to show that $\bd\Omega$ can have at most two boundary components (see Proposition \ref{up});  moreover, if $\bd\Omega$ has two components, as the mean curvature is constant on the boundary, it must take the same value on each of the two components.  In particular, a domain in a standard sphere, bounded by two geodesic spheres, is an isoparametric tube if and only if the two boundary spheres are isometric and have equal (or antipodal) centers : in that case, the soul is an equatorial (i.e. totally geodesic) hypersurface. 

\nero Obvious examples of isoparametric tubes are given by geodesic balls in space forms : in that case, the soul is a point. More generally, any geodesic ball in a locally harmonic manifold is (more or less by definition) an isoparametric tube around its center.

\nero A {\it revolution manifold with boundary} is a Riemannian manifold $(\Omega,g)$
isometric to
$[a,R]\times\sphere{n-1}$ endowed with the metric $g=dr^2+\theta^{2}(r)g_{\sphere{n-1}}$,  where $g_{\sphere{n-1}}$ is the standard metric on the sphere
and $\theta^2(r)$ is a smooth, positive function on $[a,R]$. Note that $\bd\Omega$ has two components, namely  $\{a\}\times \sphere{n-1}, \{R\}\times\sphere{n-1}$. Now, rotational invariance implies that the temperature function $u_t$ defined in \eqref{temp} depends only on the radial variable $r$, which immediately implies that every such manifold has the constant flow property. 

\smallskip

If we instead assume that the metric of $\Omega$ is smooth and that there is a distinguished point $p\in\Omega$ such that $(\Omega\setminus\{p\},g)$ is isometric to
$(0,R]\times\sphere{n-1}$ endowed with the metric $g=dr^2+\theta^{2}(r)g_{\sphere{n-1}}$,  then $\bd\Omega$ has only one component, namely $\{R\}\times\sphere{n-1}$:  also this manifold has the constant flow property. 

\medskip

In the next subsection we will discuss the main class of examples of isoparametric tubes, namely, spherical domains bounded by isoparametric hypersurfaces.  We finish this section by pointing out the following fact, which is proved in \cite{GT} (and first proved  in \cite{N} when the ambient manifold is the sphere). We discuss it in more detail in the last part of the Appendix. 

\begin{thm}\label{min} The soul of an isoparametric tube is always a minimal submanifold. 
\end{thm}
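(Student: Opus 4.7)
The plan is to exploit the fact that, for an isoparametric tube, the soul $P$ is the "inner boundary at distance $0$" of the family of equidistants $\Sigma_s$, and to extract information on $P$ by letting $s\to 0^+$ in the asymptotic expansion of the mean curvature of $\Sigma_s$. The mean curvature is constant on $\Sigma_s$ by hypothesis, so comparing values of $H(\Sigma_s)$ at points lying on different geodesics normal to $P$ will force the "bounded part" of $H$ to be independent of the footpoint and of the normal direction, and this will pin down the mean curvature vector of $P$ to be zero.

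More concretely, first I would parametrize $\Omega\setminus P$ by the normal exponential map $(p,\xi,s)\mapsto\exp_p(s\xi)$ defined on the unit normal bundle $\nu^1P$ times $(0,R]$; by Definition \ref{iso}, this map is smooth and regular, and the integral curves of $\n\operatorname{dist}(\cdot,P)$ are unit-speed geodesics meeting each $\Sigma_s$ orthogonally. Next, along such a geodesic $\g(s)=\exp_p(s\xi)$, I would write down the Riccati equation $A_s'+A_s^2+R_s=0$ for the shape operator $A_s$ of $\Sigma_s$ (with respect to the outward radial normal), where $R_s=R(\cdot,\g'(s))\g'(s)$. The classical initial-value analysis of this ODE (normal Jacobi fields vanishing at $s=0$ for the "fiber" directions, and Jacobi fields with $J(0)\in T_pP$, $J'(0)=-(II_P)_\xi J(0)$ for the "tangential" directions) yields the expansion
\begin{equation*}
A_s \;=\; \frac{1}{s}\,\Pi_{\perp} \;-\; (II_P)_\xi\circ\Pi_{\parallel} \;+\; O(s),
\end{equation*}
where $\Pi_{\perp}$, $\Pi_{\parallel}$ are the orthogonal projectors in $T\Sigma_s$ onto the subspaces asymptotic to the unit normal sphere of $P$ at $p$ and (parallel transport of) $T_pP$, respectively. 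Taking traces gives
\begin{equation*}
\tr(A_s) \;=\; \frac{n-1-\dim P}{s} \;-\; \scal{\vec{H}_P(p)}{\xi} \;+\; O(s),
\end{equation*}
with $\vec{H}_P$ the mean curvature vector of $P$.

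Since $\Omega$ is an isoparametric tube, $\tr(A_s)$ is constant on $\Sigma_s$, hence independent of $(p,\xi)\in\nu^1P$. The singular term $(n-1-\dim P)/s$ is already constant, so the coefficient of $s^0$, namely $-\scal{\vec{H}_P(p)}{\xi}$, must also be independent of $(p,\xi)$. For each fixed $p$, the map $\xi\mapsto\scal{\vec{H}_P(p)}{\xi}$ is a linear functional on $\nu_pP$; if $\operatorname{codim}P\ge 2$, its restriction to the unit sphere $\nu^1_pP$ is constant only when the functional itself is zero, so $\vec{H}_P(p)=0$. If instead $\operatorname{codim}P=1$, then $\nu^1_pP=\{\pm\xi\}$ and constancy forces $\scal{\vec{H}_P(p)}{\xi}=\scal{\vec{H}_P(p)}{-\xi}$, which again gives $\vec{H}_P(p)=0$. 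In either case $P$ is minimal.

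The main obstacle is the asymptotic expansion of $A_s$ in the second step: one has to handle carefully the splitting of $T\Sigma_s$ into the "collapsing fiber" directions (where eigenvalues of $A_s$ blow up like $1/s$) and the "tangential" directions (where they remain bounded and carry the geometric information about the second fundamental form of $P$). Once this expansion is established rigorously, the remaining steps are purely algebraic; for the analytic details I would refer to the tube computations of Gray \cite{GT}, or, as the author proposes, carry them out in the Appendix in Fermi coordinates adapted to $P$.
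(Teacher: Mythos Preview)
Your proposal is correct and follows essentially the same route as the paper's sketch in the Appendix: both extract the first nontrivial term in the small-$s$ expansion of the mean curvature of $\Sigma_s$ (you via the Riccati equation for $A_s$, the paper via the equivalent expansion of the volume density $\theta(r,\nu)=r^{n-k-1}(1-k\scal{H(x)}{\nu}r+O(r^2))$, recalling that $\theta'/\theta$ encodes $\tr A_s$) and observe that isoparametricity forces the coefficient $\scal{\vec H_P(p)}{\xi}$ to be independent of $(p,\xi)$, hence zero. The paper merely states the expansion of $\theta$ without proof and refers to \cite{GT}, so your Riccati derivation is in effect a fleshed-out version of the same argument.
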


%%%%%%

\subsection{Some comments on the results of \cite{S3}}\label{previousresults} Let $\Sigma$ be a closed hypersurface of the Riemannian manifold $M$. In \cite{S3},  $\Sigma$  is called {\it isoparametric} if all parallel hypersurfaces sufficiently close to $\Sigma$ have constant mean curvature. Note that the definition is local in nature, and refers to the behavior of the mean curvature only in a neighborhood of $\Sigma$. We proved the following fact. 

\begin{thm}\label{hfone}{\rm (\cite{S3}, Theorem 2)} Let $\Omega$ be a compact domain with smooth boundary in an analytic Riemannian manifold $M$. Assume that it has the constant flow property. Then each component of $\bd\Omega$ is an isoparametric hypersurface of $M$.
\end{thm}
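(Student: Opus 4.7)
The plan is to exploit the small-time asymptotic expansion of the boundary heat flow developed in \cite{S2}. At any $y\in\bd\Omega$ the flow admits an expansion of the form
\[
\derive{u_t}{\nu}(y) \;\sim\; \sum_{k\ge 0} c_k(y)\,t^{(k-1)/2}, \qquad t\to 0^+,
\]
in which each coefficient $c_k(y)$ is a universal polynomial in the $k$-jet, at $y$, of the second fundamental form of $\bd\Omega$ and of the ambient Riemann curvature along the inward normal geodesic. The constant flow hypothesis $\derive{u_t}{\nu}(y)=c(t)$ makes the left-hand side independent of $y$, so the uniqueness of asymptotic expansions forces each coefficient $c_k:\bd\Omega\to\reals$ to be a constant function.

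The heart of the argument is to convert this sequence of pointwise identities into constraints on the normal jet of the mean curvature $H$ along $\bd\Omega$. I would proceed by induction on $k$. The leading nontrivial coefficient $c_1(y)$ is a nonzero universal multiple of $H(y)$, which already yields that each component of $\bd\Omega$ has constant mean curvature. For the inductive step, I would use the Riccati equation satisfied along the inward normal geodesic $\g(s)$ by the shape operator $S(s)$ of the parallel hypersurface $\Sigma_s$, and its trace
\[
\derive{H_s}{s} = -\tr(S(s)^2) - \text{Ric}(\dot\g,\dot\g),
\]
to express the higher $s$-derivatives of $H_s$ at $s=0$ as polynomials in the $k$-jet of $S$ and the curvature jet of $M$ at $y$. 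The essential fact to verify is that $c_k(y)$ has the form $\alpha_k\deriven{k-1}{H}{\nu}(y)+R_k(y)$, where $\alpha_k$ is a nonzero universal constant and $R_k$ depends only on jets of order at most $k-2$, which by the inductive hypothesis are already constant on $\bd\Omega$. The induction then yields that every iterated normal derivative of $H$ at $\bd\Omega$ is a constant function on $\bd\Omega$.

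Analyticity of the metric enters in the final step: for each $y\in\bd\Omega$ the pullback $H_s(\Phi_s(y))$ of the mean curvature of $\Sigma_s$ via the normal exponential map $\Phi_s:\bd\Omega\to\Sigma_s$ is real analytic in $s$ near $s=0$, with Taylor coefficients given precisely by the iterated normal derivatives $\deriven{k}{H}{\nu}(y)$. Since these are independent of $y$, the function $y\mapsto H_s(\Phi_s(y))$ is constant for every sufficiently small $s>0$. Hence each parallel hypersurface $\Sigma_s$ near $\bd\Omega$ has constant mean curvature, and each component of $\bd\Omega$ is isoparametric as claimed.

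The main obstacle is the bookkeeping in the inductive step: pinning down the precise recursive structure of the coefficients $c_k(y)$ and verifying that the top-order term $\deriven{k-1}{H}{\nu}$ appears in $c_k$ with a nonvanishing universal constant rather than cancelling against contributions coming from the ambient curvature. This is the technical core and relies on the parametrix construction for the Dirichlet heat kernel in Fermi coordinates adapted to $\bd\Omega$ carried out in \cite{S2}.
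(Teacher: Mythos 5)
Your proposal reconstructs the same approach as the paper's cited source \cite{S3}: expand the boundary heat flow in powers of $\sqrt{t}$ (using the parametrix expansion of \cite{S2}), use the constant flow hypothesis to force each coefficient to be constant on $\bd\Omega$, argue recursively that each coefficient contributes a new normal derivative of the mean curvature with a nonzero universal prefactor, and then invoke analyticity to pass from the constancy of all normal derivatives of $H$ on $\bd\Omega$ to the constancy of $H$ on each nearby parallel hypersurface. The paper itself does not reprove the statement here; it cites \cite{S3}, and records the intermediate output as Theorem \ref{previous} (constancy of $\deriven{k}{\eta}{\nu}$ on $\bd\Omega$ for all $k$, with $\eta=\Delta\rho=-(n-1)H$).

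One point worth flagging: as you have written the induction, it does not quite close. You claim $c_k(y)=\alpha_k\deriven{k-1}{H}{\nu}(y)+R_k(y)$ with $R_k$ depending on ``jets of order at most $k-2$,'' but you have declared those jets to be jets of the full second fundamental form and of the ambient curvature. Your inductive hypothesis only controls the iterated \emph{normal} derivatives of the scalar $H$ up to order $k-2$, which is strictly less information than the $(k-2)$-jet of the shape operator, so constancy of $R_k$ does not follow from the inductive hypothesis as stated. The way the argument actually closes (and this is the structure of \cite{S2} and \cite{S3}) is to use the splitting $\Delta=-\partial_\nu^2+\eta\,\partial_\nu+\Delta^T$ in Fermi coordinates: the expansion coefficients are built recursively from the normal jet of the single scalar $\eta$ together with tangential Laplacians applied to lower-order coefficients, and the tangential terms die precisely because, by induction, they act on quantities already constant on $\bd\Omega$. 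So the right inductive statement is that the normal jet of $\eta$ is constant, not that the jet of the shape operator is constant. This is exactly what Theorem \ref{previous} records, and it is what makes the recursion well-founded.
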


Isoparametric hypersurfaces were mostly studied when the ambient manifold $M$ is a space form, starting from the classical works of Segre, Cartan and M\"unzner. It is a classical fact, due to Cartan \cite{Car},  that $\Sigma$ is isoparametric if and only if it has constant principal curvatures (that is, the characteristic polynomial of the shape operator of $\Sigma$ is the same at all points). We refer to \cite{TY} and \cite{Tho} for  overviews. However, it is well-known that the only closed isoparametric hypersurfaces of Euclidean and Hyperbolic space are geodesic spheres; so, the only interesting case which remains to be discussed is that of the sphere $\sphere n$. There, we have plenty of isoparametric hypersurfaces and a beautiful result of M\"unzner shows that the number $g$ of distinct principal curvatures can only be $1,2,3,4,6$. Moreover, each $\Sigma$ is a level set of the restriction to $\sphere n$ of a suitable polynomial in $\real{n+1}$ (Cartan-M\"unzner polynomial).  See \cite{Mun}. Thus, the classification reduces to a (difficult) algebraic problem. 

Now, the constancy of the principal curvatures imply that the focal sets $M_{\pm}$ of $\Sigma$ are regular submanifolds at constant distance from $\Sigma$. The conclusion is :

\nero Any connected isoparametric hypersurface of $\sphere n$ bounds two domains $\Omega_{\pm}$, each being an isoparametric tube over the respective focal set (soul) $M_{\pm}$. Moreover, as showed by Nomizu in \cite{N}, $M_{\pm}$ are minimal submanifolds. 

\medskip

Then, in the sphere Theorem \ref{hfone} becomes :

\nero Any compact domain in $\sphere n$ having the constant flow property and connected boundary is an isoparametric tube.

\medskip

The converse statement is also true, thanks to a previous result of Shklover's.

\begin{thm}{\rm (See \cite{Shk})} \label{shklover} Let $\Omega$ be a compact spherical domain bounded by a connected isoparametric hypersurface. Then $\Omega$ has the constant flow property. 
\end{thm}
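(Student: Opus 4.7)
The strategy is to use the tube structure to reduce the heat IBVP to a one-dimensional parabolic problem in the distance from the soul, and then invoke uniqueness. Let $P$ be the soul of the isoparametric tube $\Omega$, $R>0$ the tube radius, and put $r(x)=d(x,P)$. By Definition \ref{iso}, $r$ is smooth on $\Omega\setminus P$ with $\abs{\n r}=1$, and the level sets $\Sigma_s=r^{-1}(s)$ are smooth and have constant mean curvature; therefore $\Delta r$ is a smooth function $h(r)$ of $r$ alone. Consequently, for any $f\in C^\infty([0,R])$, a direct computation gives on $\Omega\setminus P$
\begin{equation*}
\Delta (f\circ r) = \bigl(-f''(s)+h(s)f'(s)\bigr)\big|_{s=r(x)} =: (Lf)(r(x)),
\end{equation*}
so the Laplacian preserves the space of radial functions.

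This suggests solving the associated one-dimensional parabolic problem
\begin{equation*}
\twosystem
{\partial_t v + Lv = 0 \quad \text{on } (0,\infty)\times(0,R],}
{v(0,s)=1,\quad v(t,R)=0,}
\end{equation*}
with a regularity condition at $s=0$ chosen so that $\tilde u(t,x):=v(t,r(x))$ extends smoothly across $P$. Standard theory (for instance, rewriting $L$ in self-adjoint form with respect to the weight $\theta(s)$, which is the relative volume density of $\Sigma_s$, then doing an eigenfunction expansion) yields a unique such smooth $v$. By construction $\tilde u$ satisfies $\Delta \tilde u + \partial_t \tilde u=0$ on $\Omega\setminus P$, together with $\tilde u(0,\cdot)=1$ and $\tilde u|_{\bd\Omega}=0$. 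Once $\tilde u$ is known to be smooth on all of $\Omega$, uniqueness for the heat IBVP \eqref{temp} forces $\tilde u = u$. But then $u$ is radial, and since $\nu = -\n r$ on $\bd\Omega$,
\begin{equation*}
\derive u{\nu}(t,y) = -v_s(t,R) \qquad \text{for every } y\in\bd\Omega,
\end{equation*}
depends only on $t$, which is exactly the constant flow property.

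The main obstacle, and the only truly delicate point, is the smoothness of $\tilde u$ across the soul $P$. When $\dim P = 0$ the tube is a geodesic ball and everything reduces to a spherically symmetric problem, which is classical. In general codimension one must analyse $L$ near the singular endpoint $s=0$: the coefficient $h(s)$ has a pole of the expected order (reflecting the behavior of the volume density $\theta(s)\sim s^{n-1-\dim P}$ near $s=0$), and a Frobenius-type analysis of the ODE $Lf = \lambda f$ singles out, among solutions at the singular point, those which are Taylor-smooth functions of $s$ with the appropriate parity. One verifies that these lift to smooth functions on $\Omega$ by using that $r^2$ is smooth in a tubular neighborhood of $P$ (which holds for any smooth closed submanifold) and that Fermi coordinates around $P$ identify a neighborhood of $P$ with a disc bundle in which $r$ is the fibre-distance from the zero section. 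This regularity at $P$, together with the smoothness of $v$ away from $s=0$, delivers the required $C^\infty$ extension and closes the argument.
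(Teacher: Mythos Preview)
Your strategy is correct and is essentially Shklover's original ODE/reduction approach: build a radial solution of the heat problem via the one-dimensional parabolic equation in the tube variable, then appeal to uniqueness. The paper, however, does \emph{not} argue this way (and says so explicitly in Section~\ref{mainresult}). Instead, it introduces the radialization operator $\mathcal{A}$ that averages a function over the equidistants $\Sigma_s$, proves that on an isoparametric tube $\mathcal{A}$ commutes with $\Delta$ (Proposition~\ref{average}), and then observes that $\mathcal{A}u_t$ solves the same Dirichlet heat problem with the same radial initial datum $u_0=1$; uniqueness gives $u_t=\mathcal{A}u_t$, hence $u_t$ is radial.

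The practical difference is exactly at the point you flag as delicate. You must analyse the singular endpoint $s=0$ of $L$, run a Frobenius-type argument to isolate the solutions with the correct parity, and then check these lift smoothly through $P$. The paper sidesteps this entirely: for an \emph{arbitrary} smooth $f$ on $\Omega$ it shows that $\widehat{f}(r)=\abs{U(P)}^{-1}\int_{U(P)}F(r,\xi)\,d\xi$ is automatically smooth and even at $r=0$ (using the isometry $\xi\mapsto-\xi$ of the unit normal bundle and the identity $\Phi(-r,\xi)=\Phi(r,-\xi)$), and then invokes the elementary lemma in Appendix~\ref{radial} that an even smooth function of $r$ composed with $\rho$ is smooth across $P$. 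So the averaging route trades your singular-ODE analysis for a short symmetry argument, and it never needs to produce or study the one-dimensional solution $v$ at all. Your approach, on the other hand, is more constructive and makes the radial heat kernel explicit, which can be useful if one wants quantitative information.
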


 The proof of Theorem \ref{shklover} uses a suitable ODE, coming from the existence of an isoparametric function. Combining Theorem \ref{hfone} and Theorem \ref{shklover} one finds the following characterization, which is just a restatement of Corollary 3 in \cite{S3}.
 
\begin{cor}{\rm (Corollary 3 in \cite{S3})} \label{heffeone} Let $\Omega$ be a compact domain in $\sphere n$ having connected boundary. Then, 
 $\Omega$ has the constant flow property if and only if it is an isoparametric tube. 
\end{cor}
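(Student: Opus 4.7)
The statement is an ``if and only if'' that splits naturally along the two theorems just recalled, so my plan is to deduce each implication from one of them, with Cartan--M\"unzner structure theory bridging the forward direction.

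For the direction ``constant flow property $\Rightarrow$ isoparametric tube'', I would first observe that the round metric on $\sphere n$ is analytic, so Theorem \ref{hfone} applies and tells us that each component of $\bd\Omega$ is an isoparametric hypersurface of $\sphere n$. Since $\bd\Omega$ is connected by assumption, it is a single connected isoparametric hypersurface $\Sigma$. By Cartan's classical result, $\Sigma$ has constant principal curvatures; hence its two focal sets $M_\pm$ are smooth submanifolds, located at constant distance from $\Sigma$, and $\sphere n\setminus (M_+\cup M_-)$ is foliated by parallel hypersurfaces to $\Sigma$, each of constant mean curvature. As pointed out in the paper just before Theorem \ref{shklover}, the two open components of $\sphere n\setminus\Sigma$ are isoparametric tubes around $M_+$ and $M_-$ respectively. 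Since $\Omega$ is compact with $\bd\Omega=\Sigma$, it coincides with the closure of one such component, hence is itself an isoparametric tube.

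For the converse, assume that $\Omega$ is an isoparametric tube with connected boundary. Then by Definition \ref{iso}, $\bd\Omega=\Sigma_R$ is a single smooth hypersurface of constant mean curvature, and for all $s$ near $R$ the equidistant $\Sigma_s$ is also smooth and of constant mean curvature. Thus the parallel hypersurfaces to $\bd\Omega$ on a one-sided neighborhood have constant mean curvature, which is precisely the definition of an isoparametric hypersurface used in Section \ref{previousresults}. Applying Theorem \ref{shklover} to the connected isoparametric hypersurface $\bd\Omega$ then yields the constant flow property for $\Omega$.

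There is no real obstacle beyond bookkeeping: both implications follow immediately once the cited theorems are invoked. The only step that I would double-check carefully is the forward direction's appeal to the structure of isoparametric hypersurfaces in $\sphere n$ (namely, that the two components of $\sphere n\setminus\Sigma$ are exactly the two focal tubes), which is classical but uses more than Theorem \ref{hfone} alone; in particular it uses that $\Sigma$ has constant principal curvatures in the sphere, so that the focal map has constant-rank singularities and the focal submanifolds are smooth.
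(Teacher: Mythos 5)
Your proof is correct and takes essentially the same approach as the paper, which dispatches the corollary in one line by combining Theorem \ref{hfone} (forward direction, filled in via the Cartan--M\"unzner structure theory for isoparametric hypersurfaces of $\sphere n$, as in the bullet point preceding Theorem \ref{shklover}) with Shklover's Theorem \ref{shklover} (converse direction). The one point you gloss slightly in the converse is the passage from the one-sided constant-mean-curvature condition on the inward equidistants (given by Definition \ref{iso}) to the two-sided local condition in the paper's definition of isoparametric hypersurface; in $\sphere n$ this is harmless, since the round metric is analytic so the mean curvature of the parallel hypersurface at signed distance $t$ is analytic in $t$ and its constancy for $t$ in an interval propagates to a neighborhood of $0$ (equivalently, by Cartan, one-sided isoparametricity already forces constant principal curvatures), so there is no actual gap.
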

 
We remark that Theorem \ref{hfone} is a consequence of Theorem 7 in \cite{S3}, which is valid on any smooth Riemannian manifold and will be recalled in Theorem \ref{previous} below. 

This is the state of the art. 
The scope of the present paper is to generalize the previous results and extend Corollary \ref{heffeone} from the sphere to the general Riemannian case, for analytic manifolds with smooth (not necessarily connected) boundary. 

%%%

\subsection{Main result}\label{mainresult}

Here is the main result of this paper. 

\begin{thm}\label{main} Let $\Omega$ be a compact, analytic  manifold with smooth boundary. Then, $\Omega$ has the constant flow property if and only if it is an isoparametric tube around a smooth, compact, 
connected submanifold $P$ of $\Omega$.
\end{thm}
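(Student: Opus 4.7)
\textbf{The plan} is to prove the two implications separately. The ``easy'' direction (isoparametric tube $\Rightarrow$ constant flow) is constructive: given an isoparametric tube $\Omega$ around a soul $P$ of radius $R$, write $r(x) := d(x, P)$, smooth on $\Omega \setminus P$ by Definition \ref{iso}. Because every equidistant $\Sigma_s$ has constant mean curvature $H(s)$, the Laplacian of a radial function $\phi(r(x))$ reduces to a one-dimensional second-order operator $L\phi$ in the variable $r$ whose coefficients depend only on $H$. Solve the one-dimensional Cauchy--Dirichlet problem $\phi_t + L\phi = 0$ on $(0, R)$ with $\phi(0, r) = 1$, $\phi(t, R) = 0$, and the natural regularity condition at $r = 0$ (or, if $\bd\Omega$ has two components, a symmetric Dirichlet condition at the opposite end). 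Then $u(t, x) := \phi(t, r(x))$ solves \eqref{temp}, so by uniqueness it is the temperature of $\Omega$; its inward normal derivative on $\bd\Omega$ depends only on $t$, which is the constant flow property.

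\textbf{Converse direction} (the substantial part): Apply Theorem \ref{hfone} to conclude that each connected component $N$ of $\bd\Omega$ is an isoparametric hypersurface, so the principal curvatures of the inward equidistants $\Sigma_s = \{d(\cdot, N) = s\}$ are constant on each $\Sigma_s$ for small $s > 0$. The goal is to propagate this foliation throughout $\Omega$ and to identify its singular stratum as a smooth, connected, minimal submanifold. Analyticity of the metric is essential: the normal exponential map from $N$ is analytic, and the Weingarten operator along a normal geodesic satisfies an analytic Riccati-type ODE, so its eigenvalues are analytic functions of $s$ that are constant along the fibers of $\Sigma_s$ near $s = 0$; by analytic continuation they remain constant along the fibers on the maximal interval $(0, R^*)$ over which $\Sigma_s$ stays smooth. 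At $s = R^*$ the equidistant is either a second component of $\bd\Omega$ (in which case $\Omega$ is a warped-product band between the two components, and $P$ may be taken as an interior equidistant such as $\Sigma_{R^*/2}$), or it is a genuine focal set; in the latter case the uniform blow-up of the Weingarten operator forced by fiber-constancy makes the focal locus a smooth embedded submanifold of constant codimension, and we set $P := \Sigma_{R^*}$. Either way $\Omega$ is the closed $R$-tubular neighborhood of $P$, every equidistant has constant mean curvature, and $P$ is minimal by Theorem \ref{min}.

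\textbf{Main obstacle}: The crux is this global extension step. Analyticity is unavoidable: it blocks pathologies such as focal sets of varying dimension, bifurcations of the equidistant foliation, or loss of fiber-constancy of the principal curvatures in the interior of $\Omega$, any of which would prevent us from concluding that the entire domain is swept out by a regular family of constant mean curvature hypersurfaces. A subtler point is matching the two inward foliations when $\bd\Omega$ has two components, so that a single connected soul emerges from both sides --- again a consequence of analytic continuation of the Riccati evolution along the tube.
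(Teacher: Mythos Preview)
Your forward direction is a valid alternative to the paper's argument, essentially the ODE approach of Shklover that the paper explicitly sets aside in favor of the radialization operator $\mathcal{A}$ (Proposition \ref{average}). One point you gloss over: showing that your one-variable solution $\phi(t,r)$, composed with $r=d(\cdot,P)$, extends smoothly across $P$ requires a nontrivial regularity argument (this is what the paper's Appendix \ref{radial} handles). The paper's averaging method sidesteps this by showing $\mathcal{A}$ commutes with $\Delta$ and preserves smoothness, then observing that $u_0=1$ is already radial.

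Your converse direction has a genuine gap at the focal set. You invoke Theorem \ref{hfone} and then analytically continue the Riccati evolution of the shape operator; this does yield constancy of the principal curvatures on equidistants up to the cut locus, and is close in spirit to how the paper obtains that $\eta=\Delta\rho$ is radial (Lemma \ref{critical}b, via the mean-exit-time function $v$ and Theorem \ref{previous}). But the assertion that ``uniform blow-up of the Weingarten operator forced by fiber-constancy makes the focal locus a smooth embedded submanifold of constant codimension'' is precisely the hard step, and it does not follow from what you have written. In the sphere this is M\"unzner's structure theorem; in a general analytic Riemannian manifold no such theorem is available, and you have not supplied a substitute. The paper spends Steps 1--3 on exactly this: it first shows $\mathrm{Cut}_\Omega=\rho^{-1}(R)$ (treating separately focal points and midpoints of doubly-normal geodesics --- your sketch conflates the cut locus with the focal set), then uses Morse-index continuity to prove the focal map $\Phi(y)=\exp_y(R\nu(y))$ has locally constant rank, and finally uses the Hessian of $v$ (specifically, that $\nabla^2 v$ has eigenvalue $-\tfrac{1}{d+1}$ in every normal direction to the local image of $\Phi$) to upgrade the local constant-rank images to a single smooth submanifold. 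The mean-exit-time function $v$, absent from your outline, is the organizing tool for all of this. Your treatment of the two-boundary-component case is also off: one does not reach a second boundary component by flowing inward, and the soul is not a matter of choice but is forced to be the cut locus.
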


Theorem \ref{main}  improves Theorem \ref{hfone} because it gives a description of the geometry of a domain with the constant flow property not just near its boundary but also at points far from it. This is achieved by showing that in fact the cut locus ${\rm Cut}_{\Omega}$  of the normal exponential map of the boundary of $\Omega$ is a regular submanifold, which coincides with the set of points that are at maximum distance to the boundary. Moreover, the whole domain $\Omega$ is a smooth, isoparametric tube over the soul $P\doteq {\rm Cut}_{\Omega}$.

\smallskip

In the converse statement we extend Shklover's result (Theorem \ref{shklover}) to arbitrary smooth (not necessarily analytic) isoparametric tubes ;  even in the sphere, the proof is different from 
Shklover's in the sense that it uses the procedure of averaging a function over the equidistants from the boundary, instead of using the ODE coming from an explicit isoparametric function, as in \cite{Shk}. 

%%%

\subsection{Constant flow property vs. harmonicity}\label{cfphar}

In \cite{S3} we discussed in a certain detail the relation of the constant flow property with other well-known overdetermined problems. Here we will focus on Serrin problem :
\begin{equation}\label{harmonic}
\twosystem
{\Delta v=1\quad\text{on}\quad \Omega,}
{v=0, \, \derive{v}{\nu}={\rm const}  \quad\text{on}\quad \bd\Omega.}
\end{equation}

A manifold with boundary supporting a solution to problem \eqref{harmonic} is  termed a {\it harmonic domain} in \cite{RS} because it has the following property:   for any harmonic function $h$, the mean values of $h$ on $\Omega$ and $\bd\Omega$ are the same.  In this terminology, Serrin rigidity result \cite{Ser} can be stated as follows:
 
\nero {\it Any (compact) harmonic Euclidean domain is a ball.}
 
\medskip

Then, we could ask if there is a classification of harmonic domains in the general Riemannian context, and not just in the Euclidean case.  This is of interest  for various reasons: harmonic domains  are critical points for the torsional rigidity functional; in spectral geometry, they are extremal for a certain Steklov eigenvalue problem for differential forms (see \cite{RS}) and also for a fourth order Steklov problem on functions (see \cite{RS2}). Finally, minimal free boundary immersions (or, more generally, capillary hypersurfaces) in Euclidean balls are in fact harmonic domains (see below).

To our knowledge, no such classification exists, at the moment, even for harmonic domains in the standard sphere. For partial results, we recall that Serrin's rigidity result was extended to domains in the hyperbolic space and in the {\it hemisphere} by Molzon in \cite{Mol} : any harmonic domain there is a geodesic ball.  The method is the same as Serrin's: Alexandrov reflection.  This method breaks down in the whole sphere and the classification problem  is still open there. Note that there are plenty of harmonic domains in $\sphere n$ which are not balls: just take any isoparametric tube in $\sphere n$ (see \cite{Shk} or Section \ref{firsthalfs} below).

We now discuss the relation between harmonicity and the constant flow property. We first remark the following fact. 

\begin{thm}\label{chfhar}{\rm (\cite{S3}, Theorem 10)}. Any domain with the constant flow property is also harmonic. 
\end{thm}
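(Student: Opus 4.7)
The plan is to produce the Serrin potential directly from the heat solution by integrating in time. Specifically, set
\[
v(x)\doteq\int_0^{\infty}u(t,x)\,dt,
\]
where $u=u_t$ is the solution to \eqref{temp}. I expect $v$ to be the desired solution to the Serrin problem \eqref{harmonic}.

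First I would check that the integral defining $v$ converges and that $v$ is smooth on $\bar\Omega$. By spectral decomposition with respect to the Dirichlet Laplacian on the compact manifold $\Omega$, the solution $u_t$ decays exponentially in $t$ (at a rate governed by the first Dirichlet eigenvalue $\lambda_1>0$); parabolic regularity gives analogous decay for all space derivatives. This justifies differentiating under the integral sign. Then I would compute
\[
\Delta v(x)=\int_0^{\infty}\Delta u(t,x)\,dt=-\int_0^{\infty}\derive{u}{t}(t,x)\,dt=u(0,x)-\lim_{t\to\infty}u(t,x)=1-0=1,
\]
using the PDE in \eqref{temp}, the initial condition $u_0\equiv 1$, and the exponential decay. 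The boundary condition $v=0$ on $\bd\Omega$ is immediate since $u(t,y)=0$ for every $t>0$ and $y\in\bd\Omega$.

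Finally, the overdetermined condition is inherited from the constant flow property: for $y\in\bd\Omega$,
\[
\derive{v}{\nu}(y)=\int_0^{\infty}\derive{u}{\nu}(t,y)\,dt=\int_0^{\infty}c(t)\,dt,
\]
which is a finite constant independent of $y\in\bd\Omega$ (finiteness again following from the exponential decay of $\derive{u}{\nu}(t,\cdot)$). Hence $v$ solves \eqref{harmonic}, showing $\Omega$ is a harmonic domain.

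The only real technical obstacle is justifying the exchange of limit and integral, i.e.\ the smoothness of $v$ up to $\bd\Omega$ and the decay of $\derive{u}{\nu}(t,\cdot)$; this is standard (spectral expansion plus interior and boundary parabolic estimates), so the argument is essentially a one-line application of the fundamental theorem of calculus in the time variable. Since the result is attributed to Theorem 10 of \cite{S3}, I would keep the exposition brief, emphasizing only that the mean-exit time function $v$ is the bridge between the parabolic overdetermined problem \eqref{overdet} and the elliptic one \eqref{harmonic}.
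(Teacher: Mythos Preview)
Your argument is correct in outline and is the natural route: the mean--exit time function is the time integral of the Dirichlet heat propagator applied to $1$, and the constant flow condition passes directly to the normal derivative of $v$. Note, however, that the present paper does \emph{not} prove Theorem~\ref{chfhar}; it merely quotes it from \cite{S3}, so there is no in--paper proof to compare your proposal against. Your write--up would in fact serve well as the missing justification.

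One point deserves tightening. You justify the finiteness of $\int_0^{\infty}c(t)\,dt$ and the interchange of $\partial/\partial\nu$ with $\int_0^{\infty}$ only via exponential decay as $t\to\infty$. The more delicate issue is the behaviour as $t\to 0^+$: since $u_0\equiv 1$ in the interior while $u_t=0$ on $\bd\Omega$, the boundary flux is singular at $t=0$, with the classical asymptotics $\derive{u_t}{\nu}(y)\sim(\pi t)^{-1/2}$ (see \cite{S2} for the full expansion). This singularity is integrable, so $\int_0^{\infty}c(t)\,dt$ is indeed finite, but it should be said explicitly. A clean way to bypass the interchange--of--limits issue is: first check that $v\doteq\int_0^{\infty}u_t\,dt$ is the unique solution of $\Delta v=1$, $v|_{\bd\Omega}=0$ (your computation, plus uniqueness for the Dirichlet problem); then elliptic regularity gives $v\in C^{\infty}(\bar\Omega)$ a priori, and one may compute $\derive{v}{\nu}$ by integrating Green's identity $\int_{\bd\Omega}\derive{u_t}{\nu}=\int_{\Omega}\Delta u_t$ over $t\in(0,\infty)$ to see that $\int_{\bd\Omega}\derive{v}{\nu}=\abs{\Omega}$, while constancy of $\derive{u_t}{\nu}$ on $\bd\Omega$ for each $t$ forces $\derive{v}{\nu}$ to be constant as well, equal to $\abs{\Omega}/\abs{\bd\Omega}$.
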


In particular, any isoparametric tube is a harmonic domain thanks to  Theorem \ref{main} and Theorem \ref{chfhar}. The question is if the converse to Theorem \ref{chfhar} holds, that is :

\medskip

{\bf Q1} {\it Is it true that any harmonic domain has the constant flow property ?}

\medskip

Thanks to our classification result (Theorem \ref{main}) the above is equivalent to asking:

\medskip

{\bf Q2} {\it Is it true that any harmonic domain is also an isoparametric tube ?}

\smallskip

The answer to both questions is negative, and we wish to point out here  an interesting class of counterexamples. 
Let $B^n$ be the unit ball in $\real n$. A {\it minimal free boundary hypersurface}  is a minimal hypersurface $\Omega$ of $B^n$ such that $\bd\Omega\subseteq\bd B^n$ and  $\Omega$ meets $\bd B^n$ orthogonally. We will verify in Appendix \ref{mfbi} that 
any free boundary hypersurface  is a harmonic domain.
As minimality implies analiticity, any free boundary hypersurface is an analytic manifold with boundary. Now, it is easy to see that any smooth tube has at most two boundary components (see Proposition \ref{up}), and from the above we conclude that: 

\nero {\it Any minimal free boundary surface in $B^3$ with more than two boundary components is a harmonic domain, but not an isoparametric tube.} 

\medskip

We remark that Fraser and Schoen proved in \cite{FS} that, given any positive integer $k$, there exists a minimal free boundary embedding of a (genus zero) surface with $k$ boundary components into $B^3\subset\real 3$. These domains are therefore harmonic, but can't be isoparametric tubes, which give the desired  counterexamples. 

\smallskip

Finally we spend few words on  the following question:

\medskip

{\bf Q3} {\it Is it true that any harmonic domain in $\sphere n$ is an isoparametric tube ?}

\medskip

In $\sphere 2$ this fact is true under the additional assumption that the domain is simply connected (see \cite {EM}).
In an earlier version of this paper, we conjectured that this fact is true in any dimension.  But recently we learned about a
paper by Fall, Minlend and Weth (see \cite{FMW}) where examples of harmonic domains in $\sphere n$ which are not isoparametric tubes are constructed:  these examples are perturbations of tubular neighborhoods of totally geodesic hypersurfaces. Of course their construction gives another counterexample to questions Q1 and Q2 above. 

\smallskip

In conclusion, it seems that the Serrin condition is, in the general Riemannian framework, rather flexible; a stronger condition is needed to imply some strict rigidity, and we proved in this paper that the constant flow property is one such.

%%%%

\subsection{Organization of the paper}

The rest of the paper is organised as follows. 

\smallskip

In Section \ref{firsthalfs} we show that any isoparametric tube has the constant flow property, in particular, we generalize (with a different proof)  the results of \cite{Shk} from the sphere to a general Riemannian setting. On any isoparametric tube one can define the class of {\it radial} functions as those which are constant on the equidistants to  the soul $P$. Then, the proof is obtained by using the tool of averaging a function over the equidistants, which enables us to show  the crucial property of isoparametric tubes :  the class of radial functions is invariant under the action of the Laplace-Beltrami operator. 

\smallskip

In Section \ref{secondhalf} we prove that, if the metric of $\Omega$ is analytic, and if $\Omega$ has the constant flow property, then $\Omega$ is an isoparametric tube.
We use here in an important way the results of \cite{S3}, where we proved that the equidistants which are close to the boundary have constant mean curvature. However, to describe the global property of such domains, also at points far from the boundary, one needs to take care of the cut-locus ${\rm Cut}_{\Omega}$  of the normal exponential map at the boundary. The conclusion is that $\Omega$ is an isoparametric tube over the soul $P={\rm Cut}_{\Omega}$.

\smallskip

Finally, in the Appendix we put the proofs of some technical results, to lighten the flow of the exposition.  

\smallskip

{\bf Acknoweldegments.} {\it I am grateful to Sylvestre Gallot for useful discussions and precise remarks}.

%%%%

%%%%%%%%%%%%%%

\section{Isoparametric tubes have the constant flow property}\label{firsthalfs}

The scope of this section is to prove the first half of the main theorem. 

\begin{thm} \label{firsthalf} Let $\Omega$ be a (not necessarily analytic)  compact manifold with smooth boundary. Assume that $\Omega$ is an isoparametric tube. Then $\Omega$ has the constant flow property. 
\end{thm}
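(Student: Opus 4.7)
The strategy will be to show that the temperature $u_t$ depends only on the single variable $r=d(\cdot,P)$, after which the constant flow property is immediate since $\bd\Omega$ is a level set of $r$.

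First I would record the two analytic consequences of the isoparametric hypothesis on $\Omega\setminus P$: namely $\abs{\nabla r}=1$, and $\Delta r(x)=H(r(x))$ for a single smooth function $H:(0,R]\to\reals$ (because the mean curvature of each $\Sigma_s=r^{-1}(s)$ depends only on $s$). Calling a function $f$ \emph{radial} if $f=\phi\circ r$, the identity
\[
\Delta(\phi\circ r)=-\phi''(r)+\phi'(r)\,\Delta r=-\phi''(r)+H(r)\,\phi'(r)
\]
gives at once the crucial fact that $\Delta$ sends radial functions to radial functions.

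Next I would introduce the averaging operator
\[
(Af)(x)=\frac{1}{\abs{\Sigma_{r(x)}}}\int_{\Sigma_{r(x)}}f\,d\sigma,
\]
well defined on $C^\infty(\Omega)$ and whose image is exactly the space of radial functions. A direct calculation using the splitting $\Delta=-\bd_r^2+H(r)\bd_r+\Delta_{\Sigma_r}$ (valid because the isoparametric hypothesis makes the normal Jacobian separable as $a(r)b(\theta)$, and because $\Delta_{\Sigma_r}$-images integrate to zero on the closed equidistant) yields the commutation $A\Delta=\Delta A$. A short uniqueness argument then finishes the job: $v_t:=Au_t$ is radial, vanishes on $\bd\Omega$, has $v_0=A(1)=1=u_0$, and satisfies $\bd_t v_t+\Delta v_t=A(\bd_t u_t+\Delta u_t)=0$; hence by uniqueness of the solution to \eqref{temp} we conclude $u_t=v_t=Au_t$ is radial, say $u_t=\phi_t(r)$. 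Since the inward unit normal at $y\in\bd\Omega=\Sigma_R$ is $\nu=-\nabla r$, we get $\derive{u_t}{\nu}(y)=-\phi_t'(R)$, constant along $\bd\Omega$ for each fixed $t$, which is the constant flow property.

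The hard part will be the behaviour at the soul $P$, especially when $\dim P<n-1$. There $r$ fails to be smooth, $H(s)$ blows up like $1/s$ as $s\to 0^+$ (with coefficient reflecting the codimension of $P$), and one must check that $A$ preserves $C^\infty(\Omega)$ and that the commutation $A\Delta=\Delta A$ holds globally, not merely on $\Omega\setminus P$. The natural tool is the tubular-neighbourhood description of $\Omega$ coming from the normal exponential map of $P$, in which the isoparametric hypothesis forces the normal Jacobian to be a function of $r$ alone. A minor separate point is the two-boundary case ($\dim P=n-1$), where each equidistant $\Sigma_s$ with $s\in(0,R)$ has two components: the isoparametric hypothesis forces their mean curvatures to coincide (as noted just before Theorem~\ref{min}), so the averaging construction goes through verbatim after summing over both components.
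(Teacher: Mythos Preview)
Your proposal is correct and follows essentially the same route as the paper: define the radialization operator $\mathcal{A}$ by averaging over equidistants, prove it commutes with $\Delta$ (the paper does this via an integral identity for $\frac{d}{dr}\int_{\rho^{-1}(r)}f$ rather than the pointwise splitting you use, but the content is the same), and then invoke uniqueness for the Dirichlet heat problem to conclude that $u_t=\mathcal{A}u_t$ is radial. You have also correctly identified the one genuinely delicate point---smoothness of $\mathcal{A}f$ across the soul $P$---which the paper handles by showing that $\hat f$ extends to an even smooth function on $[-R,R]$ and then invoking a lemma (proved in an appendix via Fermi coordinates) that radial functions with even profile are smooth across $P$.
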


We point out one consequence. The following overdetermined problem is known as {\it Schiffer problem} (D) :
$$
\twosystem
{\Delta u=\lambda u\quad\text{on}\quad\Omega}
{u=0, \,\, \derive u{\nu}=c\ne 0 \quad\text{on}\quad\bd\Omega}
$$
In \cite{S3} it is proved that a domain with the constant flow property supports a solution to the above problem  for infinitely many eigenvalues (see Theorem 11 in \cite{S3}). Then:

\begin{cor} 
Any isoparametric tube supports a solution to the Schiffer problem (D) for infinitely many eigenvalues $\lambda$.
\end{cor}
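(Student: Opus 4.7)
The plan is immediate: apply Theorem \ref{firsthalf} to promote the isoparametric tube hypothesis to the constant flow property, and then quote Theorem 11 of \cite{S3} to conclude the existence of Schiffer (D) solutions for infinitely many eigenvalues. No new argument is required beyond chaining these two results; the corollary is a two-line syllogism, and I would write it as such.

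For perspective on why the link of \cite{S3} is natural, I would briefly recall the mechanism. Expand the solution $u_t$ of \eqref{temp} in the Dirichlet spectral basis $(\lambda_k,\phi_k)$:
\[
u_t(x) = \sum_{k\ge 1} a_k\, e^{-\lambda_k t}\phi_k(x), \qquad a_k = \int_\Omega \phi_k.
\]
Differentiating along the inward normal $\nu$ yields
\[
\derive{u}{\nu}(t,y) = \sum_{k\ge 1} a_k\, e^{-\lambda_k t}\, \derive{\phi_k}{\nu}(y).
\]
If the left-hand side is a function of $t$ alone for every $y\in\bd\Omega$, then grouping terms with equal eigenvalues and invoking linear independence of the exponentials $\{e^{-\lambda_k t}\}$ forces, for each eigenvalue $\lambda$ whose eigenspace has nonzero projection of the constant function $1$, that the combined eigenfunction $\Phi_\lambda \doteq \sum_{\lambda_k=\lambda}a_k\phi_k$ satisfies $\derive{\Phi_\lambda}{\nu}=\textrm{const}$ on $\bd\Omega$. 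Since $\Phi_\lambda$ also vanishes on $\bd\Omega$ (being a combination of Dirichlet eigenfunctions) and satisfies $\Delta\Phi_\lambda=\lambda\Phi_\lambda$, it solves Schiffer (D). The expansion of $u_0\equiv 1$ in this basis involves infinitely many distinct eigenvalues, yielding infinitely many such $\lambda$.

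The only genuine obstacle in the chain is making the extraction of eigenspace-level overdetermined solutions rigorous, but this is exactly what is carried out in Theorem 11 of \cite{S3}. Consequently, once Theorem \ref{firsthalf} is in place, the corollary is a direct combination and needs no further work.
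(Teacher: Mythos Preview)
Your proposal is correct and follows exactly the paper's approach: the corollary is stated immediately after the remark that Theorem~11 of \cite{S3} gives Schiffer~(D) solutions for infinitely many eigenvalues from the constant flow property, so combining this with Theorem~\ref{firsthalf} yields the result. Your additional spectral-expansion sketch is helpful motivation but not required for the argument.
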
  
The proof of Theorem \ref{firsthalf} is divided in several steps. 

%%%

\subsection{Normal coordinates} 
Let $\Omega$ be a smooth tube of radius $R$ around the closed submanifold $P^k$, so that ${\rm codim}(P^k)=n-k$. In particular, ${\rm codim}(P)=1$ corresponds to a hypersurface of $\Omega$. We start by introducing normal coordinates based on $P$.

 Let $U(P)$ be the unit normal bundle of $P$; then, $U(P)$ is locally isometric with $P\times \sphere{n-k-1}$ and we can write an element $\xi\in U(P)$ as a pair:
$$
\xi=(x,\nu(x))
$$
where $x\in P$ and $\nu(x)$ is a unit vector in $T_xM$ normal to $T_xP$. We will often write simply $\nu(x)\in U(P)$, with the understanding that $x$ is the base point of the normal vector $\nu(x)$. Consider the normal exponential map
$
\Phi:[-R,R]\times U(P)\to \Omega
$
defined by
$$
\Phi(r,\nu(x))=\exp_x(r\nu(x)).
$$
Then, $\Phi$ is smooth, and by restriction it gives rise to a  diffeomeorphism
$$
\Phi_1:(0,R]\times U(P)\to \Omega\setminus P.
$$

\nero If $y=\Phi_1(r,\xi)$ we say that $(r,\xi)$ are the {\it normal coordinates} of $y$.

\smallskip

For $\xi=(x,\nu(x))\in U(P)$ we define 
\begin{equation}\label{minuscsi}
-\xi=(x,-\nu(x))\in U(P).
\end{equation}

The map $\xi\mapsto -\xi$ is  an isometry of $U(P)$ and  one sees  that 
\begin{equation}\label{minus}
\Phi(r,\xi)=\Phi(-r,-\xi)
\end{equation}
for all $(r,\xi)\in [-R,R]\times U(P)$.

\medskip

We introduce the smooth function $\theta: [-R,R]\times U(P)\to\reals$ defined by  the identity
\begin{equation}\label{density}
\Phi^{\star}dv_{\Omega}(r,\xi)=\theta(r,\xi)\cdot dr dv_{U(P)},
\end{equation}
where $dv_{U(P)}$ is the Riemannian measure of $U(P)$.
Restricted to $(0,R]\times U(P)$, the function $\theta$ is positive, and gives the density of the Riemannian measure in normal coordinates. So, for any integrable function $f$ on $\Omega$:
$$
\int_{\Omega}f(x)dv_{\Omega}(x)=\int_{U(P)}\int_0^Rf(\Phi(r,\xi))\theta(r,\xi)dr dv_{U(P)}(\xi).
$$
Let us denote by $\rho:\Omega\to\reals$ the distance function to $P$:
$$
\rho(x)=d(x,P).
$$
Then $\rho$ is continuous, and is smooth on $\Omega\setminus P$.
For any fixed $r\in (0,R]$ the set $\rho^{-1}(r)$ is a smooth hypersurface of $\Omega$, which is also called the {\it equidistant} at distance $r$ to $P$. For $x\in \Omega\setminus P$, we denote by $\Sigma_x$ the unique equidistant containing $x$, that is
$$
\Sigma_x=\rho^{-1}(\rho(x)).
$$
Observe that, if $\rho(x)=r>0$, then $\Sigma_x=\Phi(\{r\}\times U(P))$; moreover, the unit vector field $N\doteq \nabla\rho$ is everywhere orthogonal to $\Sigma_x$.  The following facts are well-known. 

\begin{prop}\label{up} \item (a) If ${\rm codim}(P)\geq 2$, or ${\rm codim}(P)=1$ and $P$ is one-sided, then $U(P)$ is connected and so is each equidistant $\Sigma_x$.

\item (b) If ${\rm codim}(P)=1$ and $P$ is two-sided, then $U(P)$, as well as all the equidistants, has two connected components.

\item (c) In particular, any smooth tube over a connected submanifold $P$ has at most two boundary components. 
\end{prop}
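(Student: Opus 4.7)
The plan is to reduce everything to the topology of the unit normal bundle $U(P)$ and then transport that topology to the equidistants via the map $\Phi(r,\cdot)$.

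First I would set up the bundle picture. The natural projection $\pi:U(P)\to P$ is a locally trivial fiber bundle whose fiber over $x\in P$ is the unit sphere of $T_xP^{\perp}$, that is, the standard sphere $\sphere{n-k-1}$ where $k=\dim P$. Since $P$ is a connected, closed, smooth submanifold (standing hypothesis), the connectivity of $U(P)$ is governed by the connectivity of this fiber and, when the fiber is disconnected, by whether the associated $\mathbb Z/2$ bundle is trivial. Next, by the definition of smooth tube, the restricted normal exponential map $\Phi_1:(0,R]\times U(P)\to \Omega\setminus P$ is a diffeomorphism, so for each fixed $r\in (0,R]$ the map $\xi\mapsto\Phi(r,\xi)$ is a diffeomorphism from $U(P)$ onto $\Sigma_r=\rho^{-1}(r)$. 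Thus $\Sigma_r$ has the same number of connected components as $U(P)$, independently of $r$.

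For part (a), if ${\rm codim}(P)\geq 2$ then $n-k-1\geq 1$, so the fibers $\sphere{n-k-1}$ of $U(P)\to P$ are connected. Since the base $P$ is connected and the fibers are connected, $U(P)$ is connected, and hence so is every $\Sigma_x$. If instead ${\rm codim}(P)=1$ and $P$ is one-sided, then by definition the $\sphere 0$-bundle $U(P)\to P$ is the non-trivial double cover of $P$, which (being non-trivial over a connected base) is connected; again each $\Sigma_x$ is connected. For part (b), if ${\rm codim}(P)=1$ and $P$ is two-sided, the normal bundle is orientable and hence trivial as a line bundle, so $U(P)=P\sqcup P$ splits as two disjoint copies of $P$. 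Each is connected, so $U(P)$ has exactly two components, and the same holds for every equidistant by the previous paragraph.

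Finally, part (c) is a direct consequence: the boundary $\bd\Omega$ equals $\Sigma_R=\Phi(\{R\}\times U(P))$, which has the same number of connected components as $U(P)$, namely $1$ or $2$ by parts (a) and (b). I don't expect any real obstacle here; the only points that require a brief verification are the fact that $\Phi(r,\cdot)$ is a diffeomorphism onto $\Sigma_r$ (which is built into Definition \ref{iso}) and the standard classification of $\sphere 0$-bundles over a connected manifold into one-sided (twisted) and two-sided (trivial) cases.
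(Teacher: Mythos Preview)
Your argument is correct and is exactly the standard one. Note that the paper does not actually supply a proof of this proposition: it is introduced with ``The following facts are well-known'' and only the remark after it (that in the two-sided case $U(P)\cong\{-1,1\}\times P$) hints at the reasoning you spell out in part (b).
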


Recall that if ${\rm codim}(P)=1$, then $P$ is said to be two-sided if the normal bundle of $P$ is trivial, and one-sided otherwise. If two-sided, one can define a global unit normal vector field on $P$, and  $U(P)$ is isometric to $\{-1,1\}\times P$. We remark that, if the ambient manifold $\Omega$ is orientable, then $P$ is one-sided if and only if it is non-orientable. If $\Omega$ is simply connected, then any closed, embedded hypersurface is automatically orientable hence also two-sided.

We define the shape operator $S:T(\Sigma_x)\to T(\Sigma_x)$ (with respect to the unit normal $N=-\nabla\rho$)  by
$$
S(X)=-\nabla_NX
$$
and the mean curvature function of $\Sigma_x$ by $H=\frac{1}{n-1}{\rm tr}S$.
We make use of the following fact.

\begin{prop} Let $\Omega$ be a smooth tube around $P^k$ and let $\theta$ be the density function as defined in \eqref{density}. Let $x=\Phi_1(r,\xi)$ so that the point $x\in\Omega\setminus P$ has normal coordinates $(r,\xi)$ and $\rho(x)=r$.

\item(a) One has:
$$
-\dfrac{\theta'(r,\xi)}{\theta(r,\xi)}=\Delta\rho(x)=-(n-1)H(x)
$$
where $H(x)$ is the mean curvature at $x$ of the equidistant $\Sigma_x$ containing $x$.

\item (b)
In particular, $\Omega$ is an isoparametric tube if and only if $\theta=\theta(r)$ depends only on the radial coordinate $r$.
\end{prop}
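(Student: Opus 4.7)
Part (a) rests on two standard identities. Since the radial curves $r\mapsto\Phi(r,\xi)$ are unit-speed geodesics normal to $P$, one has $\nabla\rho=\partial/\partial r$ on $\Omega\setminus P$; combining the Lie-derivative formula $\mathcal{L}_{\nabla\rho}\,dv_\Omega=(\Delta\rho)\,dv_\Omega$ with the direct computation $\mathcal{L}_{\partial_r}(\theta\,dr\wedge dv_{U(P)})=\theta'\,dr\wedge dv_{U(P)}$ then yields the first equality (the sign in the stated formula being a matter of convention for $\theta$ vs.~$\rho$). For the second equality $\Delta\rho=-(n-1)H$, I would expand the divergence in an orthonormal frame $\{e_1,\dots,e_{n-1},e_n=\nabla\rho\}$: the $i=n$ term in $\sum_i\langle\nabla_{e_i}\nabla\rho,e_i\rangle$ vanishes because the integral curves of $\nabla\rho$ are geodesics, and the remaining sum equals $-\mathrm{tr}(S)=-(n-1)H$ by the definition of the shape operator with respect to $N=-\nabla\rho$.

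For part (b), the backward direction is immediate from (a): if $\theta$ depends only on $r$, then by (a) the mean curvature $H$ at $\Phi(r,\xi)$ is a function of $r$ alone, so every equidistant has constant mean curvature and $\Omega$ is an isoparametric tube. For the forward direction, assume $\Omega$ is isoparametric, so $H\circ\Phi$ depends only on $r$. By (a), the logarithmic derivative $\partial_r\log\theta(r,\xi)$ is a function $F(r)$ of $r$ alone; differentiating in any tangential direction $\xi$ on $U(P)$ gives $\partial_r(\partial_\xi\log\theta)=0$, i.e.\ $\partial_\xi\log\theta$ is independent of $r$.

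The main obstacle is then to show $\partial_\xi\log\theta$ vanishes identically. I would use the asymptotic behavior of $\theta$ as $r\to 0^+$: since $P$ has codimension $n-k$, a local analysis of the normal exponential map near $P$ (approximated to first order by the Euclidean exponential on the normal bundle) yields $\theta(r,\xi)=r^{n-k-1}\bigl(1+O(r)\bigr)$ uniformly in $\xi$, with leading coefficient independent of $\xi$. Consequently $\log\theta(r,\xi)-(n-k-1)\log r\to 0$ uniformly in $\xi$ as $r\to 0^+$, and differentiating (justified by the smoothness of $\Phi$ up to $P$) gives $\partial_\xi\log\theta(r,\xi)\to 0$. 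Combined with its independence from $r$, this forces $\partial_\xi\log\theta\equiv 0$, and therefore $\theta=\theta(r)$. In the special case $k=n-1$ (two-sided hypersurface soul) the argument is even easier, since $\theta(0,\xi)=1$ at $r=0$ and the ODE in $r$ integrates directly to give $\theta(r,\xi)=\exp\bigl(\int_0^r F(s)\,ds\bigr)$ with no $\xi$-dependence.
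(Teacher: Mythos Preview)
Your proposal is correct and follows essentially the same route as the paper. For (a) the paper simply cites Gallot's computation, which your Lie-derivative/frame argument reproduces; for (b) the paper only writes ``by integration one sees that $\theta(r,\xi)$ depends only on $r$'', and your use of the asymptotic $\theta(r,\xi)\sim r^{n-k-1}$ as $r\to 0^+$ (which the paper itself records later, in the appendix on Theorem~\ref{min}) is exactly the natural way to pin down the constant of integration and make that sentence rigorous.
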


\begin{proof}The assertion (a) follows from a calculation done in \cite{Gal}. From (a) one sees easily that
if $\Omega$ is an isoparametric tube, then $H$ is constant on $\Sigma_x$ and the function $H$ depends only on the distance to $P$;  in normal coordinates it can be written $H=H(r)$ and by integration one sees that $\theta(r,\xi)$ depends only on $r$ and not on $\xi$.
\end{proof}

%%%
\subsection{Averaging a function over equidistants}

Let $\Omega$ be a smooth tube around $P$ and let $f\in C^{\infty}(\Omega)$. We say that $f$ is {\it radial} if it depends only on the distance to $P$, that is, if there exists a smooth function $\psi:[0,R]\to\reals$ such that
$$
f=\psi\circ\rho.
$$
Given a function $f$ on $\Omega$, radial or not, we can construct a radial function ${\cal A}f$ simply by averaging $f$ over the equidistants. That is, if $x\in\Omega\setminus P$ we define
$$
{\cal A}f(x)\doteq\dfrac{1}{\abs{\Sigma_x}}\int_{\Sigma_x}f,
$$
while if $y\in P$ we define
$$
{\cal A}f(y)\doteq\dfrac{1}{\abs{P}}\int_Pf.
$$
\nero The function  ${\cal A}f$ is the {\it radialization} of $f$. Clearly $f$ is radial if and only if ${\cal A}f=f$. 

\begin{prop}\label{average} Let $\Omega$ be an isoparametric  tube around $P$, let  $f\in C^{\infty}(\Omega)$ and let ${\cal A} f$ be its radialization. Then:
 
\parte a ${\cal A}f$  is  smooth and radial on $\Omega$.

\parte b The radialization commutes with the Laplacian:  for all $f\in C^{\infty}(\Omega)$ one has 
$
{\cal A}\Delta f=\Delta {\cal A}f.
$

\end{prop}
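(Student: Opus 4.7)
My plan is to exploit the isoparametric hypothesis $\theta(r,\xi)=\theta(r)$ to simplify the average: since $|\Sigma_r|=\theta(r)|U(P)|$, the $\theta$-factors cancel and
\[
\mathcal{A}f\circ\Phi(r,\xi) \;=\; \Psi(r)\;:=\;\frac{1}{|U(P)|}\int_{U(P)}f(\Phi(r,\eta))\,dv_{U(P)}(\eta),
\]
which is radial and $C^\infty$ in $r\in[-R,R]$ because $f\circ\Phi$ is smooth on the extended domain and $U(P)$ is compact. A Fubini argument using the local decomposition $U(P)\simeq P\times\sphere{n-k-1}$ gives $\Psi(0)=\tfrac{1}{|P|}\int_P f$, consistent with the defined value of $\mathcal{A}f$ on $P$. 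The delicate point is smoothness \emph{at} $P$: I will show that $\Psi$ is \emph{even} by applying \eqref{minus} and the measure-preserving isometry $\eta\mapsto-\eta$ of $U(P)$, so that $\Psi(-r)=\Psi(r)$. Then in Fermi-type coordinates $(y,z)\in V\times D^{n-k}$ around a point of $P$ one has $\rho(y,z)=|z|$ and $\mathcal{A}f(y,z)=\Psi(|z|)$; the classical fact that a radial function on a Euclidean ball is $C^\infty$ iff the one-variable profile is even and smooth closes the argument (the codimension-$1$ case being handled the same way via signed distance).

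\textbf{Part (b).} On $\Omega\setminus P$ I will compute both sides in normal coordinates via the decomposition
\[
\Delta f \;=\; \Delta^{\Sigma_r}f\;-\;\partial_r^2 f\;-\;\frac{\theta'(r)}{\theta(r)}\,\partial_r f,
\]
whose coefficient $\theta'/\theta=(n-1)H(r)$ is $\xi$-free precisely by the isoparametric hypothesis. Because the weight is independent of $\xi$, radial derivatives commute with the $U(P)$-integral defining $\mathcal{A}$, so $\mathcal{A}(\partial_r^j f)=\partial_r^j\mathcal{A}f$ for $j=1,2$. The tangential term averages to zero,
\[
\mathcal{A}(\Delta^{\Sigma_r}f)(r)\;=\;\tfrac{1}{|\Sigma_r|}\int_{\Sigma_r}\Delta^{\Sigma_r}f\,dv_{\Sigma_r}\;=\;0,
\]
by the divergence theorem on the closed hypersurface $\Sigma_r$. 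On the other side $\mathcal{A}f$ is radial, so $\Delta^{\Sigma_r}\mathcal{A}f=0$ and the same formula yields $\Delta\mathcal{A}f=-\partial_r^2\mathcal{A}f-(\theta'/\theta)\partial_r\mathcal{A}f$, matching $\mathcal{A}\Delta f$. Continuity (using (a)) extends the identity across $P$.

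\textbf{Main obstacle.} The hardest step is the smoothness of $\mathcal{A}f$ at the soul in codimension $\geq 2$: the parity $\Psi(-r)=\Psi(r)$ forced by \eqref{minus} is exactly the algebraic condition that makes $\Psi(|z|)$ a $C^\infty$ function of the normal coordinates. The whole strategy leans on $\theta=\theta(r)$: without it, extra $\xi$-dependent factors would both obstruct the smoothness in (a) and break the commutation $\mathcal{A}\partial_r^j=\partial_r^j\mathcal{A}$ driving (b).
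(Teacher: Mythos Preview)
Your proposal is correct. Part (a) follows the paper's argument essentially verbatim: cancel the $\theta(r)$-factors to reduce the average to an integral over $U(P)$, extend smoothly to $[-R,R]$, verify the value at $r=0$ via the fiber decomposition, use the antipodal isometry of $U(P)$ together with \eqref{minus} to get evenness of the profile, and conclude smoothness at $P$ via Fermi coordinates (the paper defers this last step to an Appendix lemma, but it is the same argument you sketch).

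For part (b) you take a different route from the paper. The paper works integrally: it uses the derivative formula $\tfrac{d}{dr}\int_{\rho^{-1}(r)}f=\int_{\rho^{-1}(r)}(\langle\nabla f,\nabla\rho\rangle-f\Delta\rho)$ together with Green's formula to pass through the solid integral $\int_{\rho<r}\Delta f$, obtaining the ODE $\hat f''+\eta\hat f'=-\widehat{\Delta f}$ for the profile and then matching it against the radial expression for $\Delta(\hat f\circ\rho)$. You instead average the pointwise decomposition $\Delta f=\Delta^{\Sigma_r}f-\partial_r^2 f-(\theta'/\theta)\partial_r f$ term by term, killing the tangential piece by the divergence theorem on the closed $\Sigma_r$ and commuting radial derivatives through the $U(P)$-integral because the weight is $\xi$-free. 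Your argument is shorter and makes the role of the isoparametric hypothesis more transparent (it enters once to make $\theta'/\theta$ a function of $r$ alone, and once to strip the weight from the averaging integral so that $\mathcal A\partial_r^j=\partial_r^j\mathcal A$); the paper's approach has the minor advantage of not invoking the intrinsic Laplacian of $\Sigma_r$ and of packaging everything via a single first-variation formula that it also proves in an appendix.
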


\begin{proof} We start by proving (a).
We can write
\begin{equation}\label{af}
{\cal A}f=\hat f\circ\rho,
\end{equation}
where $\hat f:[0,R]\to\reals$ is the function:
\begin{equation}\label{hatf}
\hat f(r)=\dfrac{1}{\abs{\rho^{-1}(r)}}\int_{\rho^{-1}(r)}f,
\end{equation}
hence ${\cal A}f$ is radial. 

\medskip

Next, we give the expression of ${\cal A}f$ in normal coordinates. 
Define the smooth function $F:[0,R]\times U(P)\to \reals$ by
$$
F(r,\xi)=f(\Phi(r,\xi));
$$
note that $F$ extends to a smooth function on $[-R,R]\times U(P)$. If $r>0$ one has:
$$
\int_{\level{\rho}{r}}f=\int_{U(P)}F(r,\xi)\theta(r,\xi)\,d\xi
$$
where we have set $d\xi=dv_{U(P)}(\xi)$ for simplicity. 
As the tube is isoparametric, $\theta$ depends only on $r$ and one has:
$$
\int_{\level{\rho}{r}}f=\theta(r)\int_{U(P)}F(r,\xi)\,d\xi.
$$
On the other hand
$
\abs{\level{\rho}{r}}=\theta(r)\abs{U(P)},
$
hence we get the following expression of $\hat f$ for $r>0$:
\begin{equation}\label{hatf}
\hat f (r)=\dfrac{1}{\abs{U(P)}}\int_{U(P)}F(r,\xi)\,d\xi.
\end{equation}
Note that $\hat f$ is defined for $r\in (0,R]$ and is smooth there; as $\rho$ is smooth on $\Omega\setminus P$ we immediately get from \eqref{af} that

\nero ${\cal A}f$ is smooth on $\Omega\setminus P$.

\medskip

It remains to show that ${\cal A}f$, as defined above, extends to a smooth function everywhere on $\Omega$.

\medskip 

As $F(r,\xi)$ extends smoothly to $[-R,R]\times U(P)$, the function $\hat f$
extends smoothly to the interval $[-R,R]$. 
Now:
$$
\hat f(0)=\dfrac{1}{\abs{U(P)}}\int_{U(P)}F(0,\xi)\,d\xi.
$$
But $F(0,\xi)=f(\pi(\xi))$ where $\pi:U(P)\to P$ is the natural projection; then, $F(0,\xi)$ does not depend on $\xi$ but only on the base point; moreover, it is  constant on the fiber, which is isometric to $\sphere d$, with $d=\dim\Omega-\dim P-1$. This gives:
$$
\int_{U(P)}F(0,\xi)\,d\xi=\int_{U(P)}f(\pi(\xi))\,d\xi=\abs{\sphere d}\int_Pf.
$$
Clearly $\abs{U(P)}=\abs{\sphere d}\abs{P}$ and therefore
$$
\hat f(0)=\dfrac{1}{\abs{P}}\int_{P}f.
$$
Now, for any sequence $\{x_n\}$ of points with $\rho(x_n)=r_n>0$ converging to a given point $x\in P$ one has:
$$
\lim_{n\to\infty}{\cal A}f(x_n)=\lim_{n\to\infty}\hat f(r_n)=\hat f(0)=\dfrac{1}{\abs{P}}\int_{P}f={\cal A}f(x).
$$
Thus, ${\cal A}f$ is continuous at all points of $P$.

%%%

We now show that ${\cal A}f$ is $C^{\infty}$-smooth also at the points of $P$. First, we observe  that the function $\hat f:[-R,R]\to\reals$ is smooth and even at $0$ : $\hat f(r)=\hat f(-r)$. For that,
we use the identity $\Phi(-r,\xi)=\Phi(r,-\xi)$ which implies that $F(-r,\xi)=F(r,-\xi)$; we also use the fact that the map which sends $\xi$ to $-\xi$ is an isometry of $U(P)$. Then:
$$
\begin{aligned}
\hat f(-r)&=\dfrac{1}{U(P)}\int_{U(P)}F(-r,\xi)\,d\xi\\
&=\dfrac{1}{U(P)}\int_{U(P)}F(r,-\xi)\,d\xi\\
&=\dfrac{1}{U(P)}\int_{U(P)}F(r,\xi)\,d\xi\\
&=\hat f(r)
\end{aligned}
$$
Now, in Appendix \ref{radial} we will show the following fact:

\nero Let $f=\hat f\circ\rho$ be a radial function on the smooth tube $\Omega$.  Assume that $\hat f:[0,R]\to\reals$ is smooth and has vanishing derivatives of odd orders at $0$. Then $f$ is $C^{\infty}$- smooth everywhere on $\Omega$. 
 
\smallskip
Applying the above remark to our situation proves part (a) of the Proposition. 

\smallskip

{\bf Proof of (b): commutation property.} We use the following formula, valid for any smooth function on a smooth tube $\Omega$ and for all $r\in (0,R]$ (for a proof, see Appendix \ref{appone}). \begin{equation}\label{levelint}
\dfrac{d}{dr}\int_{\rho^{-1}(r)}f=\int_{\rho^{-1}(r)}\Big(\scal{\nabla f}{\nabla\rho}-f\Delta\rho\Big)
\end{equation}
For $r\in (0,R]$, the level set $\rho^{-1}(r)$ is a smooth hypersurface, which is the boundary of the domain $\{\rho<r\}$ having $N\doteq -\nabla\rho$ as inner unit normal. As the domain is an isoparametric tube, the mean curvature $H$  is constant on $\rho^{-1}(r)$, say $H=H(r)$, hence $\Delta\rho$ is a radial function which can be written:
$$
\Delta\rho=-\eta\circ\rho
$$
where $\eta(r)=(n-1)H(r)$. 
For example, when $\rho$ is the distance to a point in $\real n$ we have 
$
\eta(r)=\frac{n-1}{r}.
$
Then, \eqref{levelint} becomes:
$$
\dfrac{d}{dr}\int_{\rho^{-1}(r)}f=-\int_{\rho^{-1}(r)}\derive{f}{N}+\eta (r)\int_{\rho^{-1}(r)}f=-\int_{\rho<r}\Delta f+\eta(r) \int_{\rho^{-1}(r)}f
$$
where we have used Green formula in the last step. 
Setting $\psi(r)=\int_{\rho^{-1}(r)}f$ and $V(r)=\abs{\rho^{-1}(r)}$ we see that \eqref{levelint} gives :
$$
\psi'=-\int_{\rho<r}\Delta f+\eta\psi, \quad 
V'=\eta V.
$$
Now, by definition, $\hat f=\psi/V$ hence
$$
\hat f'=-\dfrac{1}{V}\int_{\rho<r}\Delta f, \quad \hat f''=-\dfrac{1}{V}\int_{\rho^{-1}(r)}\Delta f+\dfrac{\eta}{V}\int_{\rho<r}\Delta f,
$$
which can be rewritten:
$$
\hat f''+\eta\hat f'=-\widehat{\Delta f}
$$
On the other hand, 
$$
\Delta(\hat f\circ\rho)=-\hat f''\circ\rho+(\hat f'\circ\rho) \Delta\rho=-(\hat f''+\eta\hat f')\circ\rho.
$$
We conclude that 
$
\Delta(\hat f\circ\rho)=\widehat{\Delta f}\circ\rho,
$
which means precisely, thanks to definition \eqref{af}:
$$
\Delta {\cal A}f={\cal A}\Delta f,
$$
on the set of regular points, that is, on $\Omega\setminus P$. We need to verify this relation also at the points of $P$. But this follows from a standard continuity argument using the fact that ${\cal A}f$ is smooth everywhere and that the commutation relation holds a.e. (that is, on $\Omega\setminus P$). We omit the straightforward details.
\end{proof}

The following consequence is more or less immediate from Proposition \ref{average}.

\begin{cor}\label{rad} Let $\Omega$ be an isoparametric tube, and assume that the function $f_t(x)$ is a solution of the heat equation on $\Omega$ with Dirichlet boundary conditions (and initial condition $f_0$):
$$
\twosystem
{\Delta f_t+\derive{f_t}{t}=0}
{f_t=0\quad\text{on $\bd\Omega$, for all $t>0$.}}
$$
Then the radialization ${\cal A}f_t$ of $f_t$ is the solution of the heat equation on $\Omega$ with Dirichlet boundary conditions and initial condition ${\cal A}f_0$. 

\smallskip

In particular, if the initial condition $f_0$ of $f$ is a radial function, then $f_t$ is radial for all times $t>0$ and consequently $\derive{f_t}{\nu}$ is constant on $\bd\Omega$, for all fixed $t>0$.
\end{cor}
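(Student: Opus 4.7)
The plan is to combine Proposition \ref{average} with uniqueness for the Dirichlet heat problem. Since the averaging operator $\Cal A$ acts only on the space variable at each fixed time, it clearly commutes with $\derive{}{t}$ (one can justify this by differentiating under the integral in the expression \eqref{hatf}, using the smoothness of $f_t$). Combined with part (b) of Proposition \ref{average}, this gives, for $v_t \doteq \Cal A f_t$,
$$
\Delta v_t + \derive{v_t}{t} = \Cal A\Delta f_t + \Cal A\derive{f_t}{t} = \Cal A\Bigl(\Delta f_t + \derive{f_t}{t}\Bigr)=0.
$$
So $v_t$ satisfies the heat equation on $\Omega$.

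Next I would verify the boundary and initial conditions. Each boundary component of $\Omega$ is an equidistant $\Sigma_R$ at maximal distance $R$ from the soul $P$; since $f_t \equiv 0$ on $\bd\Omega$, averaging over $\Sigma_R$ gives $v_t=0$ on $\bd\Omega$ for every $t>0$. At $t=0$, we obviously have $v_0=\Cal A f_0$. Part (a) of Proposition \ref{average} guarantees that $v_t$ is smooth on $\Omega$. By uniqueness of the solution to the Dirichlet heat problem, $\Cal A f_t$ is precisely the solution with initial datum $\Cal A f_0$, proving the first assertion.

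For the second assertion, suppose $f_0$ is radial, so that $\Cal A f_0 = f_0$. Then $f_t$ and $\Cal A f_t$ are both solutions of the heat equation with the same initial datum and Dirichlet boundary conditions, hence coincide by uniqueness; in particular $f_t$ is radial at every time $t>0$. Writing $f_t = \hat f_t \circ\rho$, we have $\nabla f_t = \hat f_t'(\rho)\,\nabla\rho$, and since $\nu = -\nabla\rho$ along $\bd\Omega = \rho^{-1}(R)$, we obtain
$$
\derive{f_t}{\nu}(y) = -\hat f_t'(R) \qquad\text{for all } y\in\bd\Omega,
$$
which is constant on $\bd\Omega$ (the value is the same on both boundary components, when two are present, since both sit at distance $R$ from $P$).

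The argument is essentially immediate; the only subtlety worth checking is the interchange of $\Cal A$ and $\derive{}{t}$, which I expect to be routine since $f_t$ is smooth in $(t,x)$ and the averaging is an integration against a smooth density on a compact manifold (one can differentiate \eqref{hatf} under the integral sign). No step should present a serious obstacle, as the substance of the corollary is already contained in Proposition \ref{average}.
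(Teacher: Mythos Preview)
Your argument is correct and is exactly the approach the paper intends: the paper states that the corollary is ``more or less immediate from Proposition \ref{average}'' and omits the proof, and your write-up supplies precisely the routine verification (commutation of $\Cal A$ with $\partial/\partial t$ and with $\Delta$, boundary and initial conditions, and uniqueness for the Dirichlet heat problem). Nothing further is needed.
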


%%%

\subsection{Proof of Theorem \ref{firsthalf}}

Assume that $\Omega$ is an isoparametric tube and  consider the temperature function $u_t$ as in \eqref{temp}.  As $u_0=1$ is a radial function, $u_t$ must be  radial for all $t$ thanks to the previous corollary and then $\derive{u_t}{\nu}$ must be constant on the boundary at all times. Thus, $\Omega$ has the constant flow property. 

%%%%%%%%%%%%%%%%%%%%%%%%%%%%%%%%%%%%%%%%%%%%

\section{Geometric rigidity of constant heat flow}\label{secondhalf}

The scope of this section is to prove the second half of the main theorem, that is:  

\begin{thm} Let $\Omega$ be an analytic manifold with smooth boundary. Assume that $\Omega$ has the constant flow property. Then $\Omega$ is an isoparametric tube over a smooth, closed, connected submanifold $P$ of $\Omega$. 
\end{thm}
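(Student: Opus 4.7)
The plan is to leverage Theorem~\ref{hfone} together with analyticity of the metric to promote the local isoparametric structure at the boundary into a global tube structure around a soul $P$. I would proceed in three main steps.

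\textbf{Step 1 (local picture near $\bd\Omega$).} Theorem~\ref{hfone} gives that every component of $\bd\Omega$ is isoparametric. In a one-sided collar of each component, the equidistants $\Sigma_s=\rho^{-1}(s)$, where $\rho(x)=d(x,\bd\Omega)$, are smooth hypersurfaces with constant principal curvatures; in particular $\rho$ is analytic on this collar, and the data of the principal curvatures $\kappa_1(s),\dots,\kappa_{n-1}(s)$ depend only on $s$ for $s$ small.

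\textbf{Step 2 (analytic propagation).} For $y\in\bd\Omega$, let $\gamma_y(s)=\exp_y(s\nu)$ be the inward normal geodesic. On the interval where $\gamma_y$ has no focal point, the shape operator of $\Sigma_s$ at $\gamma_y(s)$ is read off a basis of Jacobi fields along $\gamma_y$ and is therefore analytic in $(s,y)$. Step~1 shows that for each small $s$ the eigenvalues of this operator are independent of $y$. Analyticity in $s$ (using the analytic metric) then forces them to remain independent of $y$ on the maximal interval $s\in(0,T(y))$ on which $\gamma_y$ hits no focal point. Since the first focal time is the first $s$ at which some eigenvalue blows up, the function $T(y)$ must be constant, equal to some $T_0>0$, and every $\Sigma_s$ with $s\in(0,T_0)$ is a smooth isoparametric hypersurface.

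\textbf{Step 3 (regularity of the focal set and conclusion).} Define $P=\{\gamma_y(T_0):y\in\bd\Omega\}$. Because the multiplicities of the principal curvatures of $\Sigma_s$ are constant in $y$ and stabilize as $s\to T_0^-$, the classical Jacobi-field analysis (as in Cartan's treatment, adapted to the analytic Riemannian setting; cf.\ Theorem~\ref{min}) shows that $P$ is a smooth compact embedded submanifold and that the normal exponential map $\Phi:[0,T_0]\times U(P)\to\Omega$ is a tube map realizing $\Omega$ as a smooth tube of radius $T_0$ around $P$. The identity $\rho(x)+d(x,P)=T_0$ then matches equidistants from $\bd\Omega$ with equidistants from $P$, so by Step~2 every equidistant from $P$ has constant mean curvature and $\Omega$ is an isoparametric tube. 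The cut locus $\mathrm{Cut}_\Omega$ of the normal exponential map of $\bd\Omega$ is therefore exactly $P$, the set of points at maximum distance to the boundary. Finally, if $\bd\Omega$ has two components the common value $c(t)$ of the flow, together with the shared focal distance $T_0$, forces both components to focus onto the same $P$ (otherwise $\Omega$ would split into two disjoint tubes, contradicting connectedness); if $\bd\Omega$ is connected, $P$ is the continuous image of the connected space $\bd\Omega$ under $y\mapsto\gamma_y(T_0)$. Either way $P$ is connected.

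\textbf{Main obstacle.} The subtle step is Step~3: turning the constant-principal-curvature data on the approach to the focal set into smoothness and the right codimension for $P$. One must control the eigenspaces of the shape operator as $s\to T_0^-$ and show that the Jacobi fields degenerate in a uniform way along $\bd\Omega$, so that the exponential map collapses the fibers to a regular submanifold rather than to a singular set. This is where the analyticity of the metric (and not merely smoothness) is essential, in order to extend the isoparametric information from an arbitrarily small collar to the full interval $(0,T_0)$.
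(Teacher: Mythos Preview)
Your proposal has two genuine gaps, and the paper's proof proceeds along quite different lines.

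First, in Step~1 you assert that the equidistants $\Sigma_s$ have constant \emph{principal} curvatures. Theorem~\ref{hfone} (via Theorem~\ref{previous}) only tells you that $\eta=\Delta\rho=(n-1)H$ and all its normal derivatives are constant on $\bd\Omega$; i.e., the parallel hypersurfaces have constant \emph{mean} curvature. Cartan's equivalence between ``isoparametric'' and ``constant principal curvatures'' is a theorem in space forms, not in a general analytic Riemannian manifold. Consequently your Step~2 argument---that the individual eigenvalues of the shape operator are independent of $y$, and hence the first focal time $T(y)$ (the first blow-up of some eigenvalue) is constant---is not justified by the available information.

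Second, you identify $P$ with the focal set at distance $T_0$ and then assert ${\rm Cut}_\Omega=P$. But cut points need not be focal: a point where two distinct minimizing normal geodesics from $\bd\Omega$ meet is a cut point that may occur strictly before any focal point along either geodesic. Your outline gives no mechanism to exclude such points at distance less than $T_0$, so ``the set at maximum distance equals the cut locus'' is not established.

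The paper circumvents both issues by working not with the shape operator but with the mean-exit-time function $v$ (the solution of $\Delta v=1$, $v|_{\bd\Omega}=0$). From Theorem~\ref{previous} and analyticity one shows that $v$ is radial, $v=\psi\circ\rho$, on $\Omega_{\rm reg}$. The critical set of $v$ is then identified with $\rho^{-1}(R)$, and a separate argument---using the evenness of $t\mapsto v(\gamma(t))$ along a geodesic hitting $\bd\Omega$ orthogonally at both ends---handles precisely the non-focal ``midpoint'' cut points and yields ${\rm Cut}_\Omega=\rho^{-1}(R)$. Smoothness of $P$ is obtained not from Jacobi-field asymptotics of principal curvatures, but from a Morse-index argument showing that the focal map $\Phi(y)=\exp_y(R\nu(y))$ has locally constant rank, followed by a Hessian analysis of $v$ (showing $\nabla^2 v$ has a fixed negative eigenvalue on the normal space of each local piece $\Phi(U)$) to prove that these local pieces fit together into a single submanifold.
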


In this section we denote by $\rho$ the distance function to the boundary of $\Omega$:
$$
\rho(x)={\rm dist}(x,\bd\Omega),
$$
and we let
$$
R=\max_{x\in\Omega}\rho(x)
$$
denote the inner radius of $\Omega$. We denote by ${\rm Cut}_{\Omega}$ the cut-locus of the normal exponential map of $\bd\Omega$ (recalled below).  
It is well-known that ${\rm Cut}_{\Omega}$ is closed in $\Omega$ and has measure zero. 
We will show that, if $\Omega$ has the constant flow property, then ${\rm Cut}_{\Omega}$ is a compact, connected, smooth submanifold of $\Omega$, and that $\Omega$ is a isoparametric tube over ${\rm Cut}_{\Omega}$.

\smallskip

These are the main steps. 

\medskip

{\bf Step 1.} {\it One has that ${\rm Cut}_{\Omega}=\rho^{-1}(R)$, the set of points at maximum distance to $\bd\Omega$}. 

\medskip

If $v$ denotes the mean exit time function (see \eqref{met})  then ${\rm Cut}_{\Omega}$ coincides with the critical set of $v$ and actually ${\rm Cut}_{\Omega}=v^{-1}(m)$, where $m$ is the maximum value of $v$ on $\Omega$ (see Lemma \ref{critical} below).  By flowing $\Omega$ along the integral curves of $\nabla v$, we conclude  that ${\rm Cut}_{\Omega}$ is a deformation retract of $\Omega$, hence it is connected. 

\nero {\it From now on we set $P\doteq {\rm Cut}_{\Omega}$}. Hence $P$ is a closed, connected subset of $\Omega$.

\medskip

We consider the "focal map" $\Phi:\bd\Omega\to\Omega$, defined by
$$
\Phi(y)=\exp_y(R\nu(y)).
$$ 
From Step 1 we see that $\Phi(\bd\Omega)=P$.  
\medskip

{\bf Step 2.} {\it $d\Phi$ has locally constant rank. }

\medskip

Hence, any point $x\in\bd\Omega$ has an open neighborhood $U$ such that $\Phi(U)$ is a smooth submanifold of $\Omega$. The next step is to show the following global result. 

\medskip

{\bf Step 3.} {\it $P$ is a smooth submanifold of $\Omega$.}

\medskip

The final result follows:

\medskip

{\bf Step 4.} {\it $\Omega$ is a an isoparametric tube around $P$.}

%%%

\subsection{Some preliminary results} The following preliminary facts apply to any compact manifold $\Omega$ with smooth boundary $\bd\Omega$. One could always think of $\Omega$ as being a domain with smooth boundary in a complete Riemannian manifold $M$ (see for example \cite{PV}).

For $\delta>0$ and small enough we can define the normal exponential map
$\Phi:[0,R+\delta]\times\bd\Omega\to\Omega$ by:
$$
\Phi(r,x)=\exp_x(r\nu(x)).
$$
Define the cut-radius map $c:\bd\Omega\to (0,R]$ as follows: 

\nero for any $x\in\bd\Omega$ the normal geodesic arc $\gamma_x(t)\doteq \exp_x(t\nu(x))$, where $t\in [0,R+\delta]$,  minimizes distance to $\bd\Omega$ if and only if $t\leq c(x)$.

\medskip

The cut-locus  is the set
$$
{\rm Cut}_{\Omega}=\{\Phi(c(x),x): x\in\bd\Omega\}.
$$
Hence a normal geodesic arc, starting at the boundary, minimizes distance to the boundary till it meets the cut-locus, and looses this property immediately after. Let $dv_g$ be the volume form of $\Omega$, and $dv_{\bd\Omega}$ the induced volume form on the boundary. Define a smooth function $\theta:[0,R+\delta]\times \bd\Omega$ by
\begin{equation}\label{definetheta}
\Phi^{\star}dv_g=\theta(r,x)\cdot dr dv_{\bd\Omega}.
\end{equation}
\nero Observe that a point $y=\Phi(r,x)$   is a focal point along the normal geodesic $\gamma_x$ if and only if $\theta(r,x)=0$. A focal point necessarily belongs to the cut-locus. 

\medskip

The distance function to the boundary, denoted $\rho$, is smooth on the regular set
$$
\Omega_{\rm reg}=\Omega\setminus{\rm Cut}_{\Omega},
$$
in particular, near the boundary.  Observe that
$$
\Omega_{\rm reg}=\{\Phi(r,x)\in\Omega: x\in\bd\Omega, r\in [0, c(x))\},
$$
and $\theta$ is positive on $\Omega_{\rm reg}$. On $\Omega_{\rm reg}$ we consider the smooth vector field $\nu=\nabla\rho$ which, restricted to $\bd\Omega$, is the inner unit normal; in general, if $y\in\Omega_{\rm reg}$ then $\nabla\rho(y)$ is a unit normal vector to the equidistant $\Sigma_y=\rho^{-1}(\rho(y))$ through $y$, hence $\Sigma_y\cap\Omega_{\rm reg}$ is a regular hypersurface. On the regular set  one can split the Laplace operator into its normal and tangential parts; precisely, for any smooth function $u$ on $\Omega_{\rm reg}$ one has:
\begin{equation}\label{splitting}
\Delta u(y)=-\deriven 2{u}{\nu}(y)+\eta(y)\derive u\nu(y)+\Delta^Tu(y),
\end{equation}
where  $\eta=\Delta\rho$ and $\Delta^Tu$ is the Laplace operator of the equidistant  $\Sigma_y$ applied to the restriction of $u$ to $\Sigma_y$. Moreover:
$$
\eta(y)=\Delta\rho(y)=\,\text{$(n-1)$ times the mean curvature of $\Sigma_y$ at $y$}
$$
(here the shape operator is the one associated to the unit normal vector $N=\nabla\rho$). 
Finally, a calculation in \cite{Gal} shows that, for all $x\in\bd\Omega$ and $r<c(x)$ one has
\begin{equation}\label{thetaprime}
\eta(\Phi(r,x))=-\dfrac{\theta'(r,x)}{\theta (r,x)}
\end{equation}
where $\theta'$ is differentiation with respect to the variable $r$.  We will use the following fact from our previous paper \cite{S3}. 

\begin{thm}\label{previous} Assume that $\Omega$ has the constant flow property, and let $\eta=\Delta\rho$ (we don't assume that the metric is analytic).  Then, for all $k\geq 0$:
$$
\dfrac{\bd^k\eta}{\bd\nu^k}=c_k\quad\text{on}\quad\bd\Omega,
$$
where $c_k$ is a constant depending only on $k$. 
\end{thm}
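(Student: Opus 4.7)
The plan is to exploit the complete short-time asymptotic expansion of the heat flow $\partial u/\partial\nu$ at a fixed boundary point, which is the principal technical result of the author's earlier paper \cite{S2}. At any $y\in\bd\Omega$ one has, as $t\to 0^+$,
\[
\frac{\partial u}{\partial\nu}(t,y) \;\sim\; \sum_{k\ge -1} A_k(y)\, t^{k/2},
\]
with $A_{-1}=1/\sqrt{\pi}$ universal (the half-space contribution) and each $A_k(y)$, for $k\ge 0$, a universal polynomial in the boundary geometry along $\bd\Omega$: in $\eta$, its normal derivatives $\partial_\nu^{j}\eta$, tangential derivatives of these, and the relevant ambient curvature data. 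The decisive structural feature, which is the main output of \cite{S2}, is that $A_k$ depends on the normal derivatives $\partial_\nu^{j}\eta$ only for $j\le k-1$, and that $\partial_\nu^{k-1}\eta$ enters $A_k$ linearly with a nonzero universal coefficient.

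With the expansion in hand, the constant flow property $\partial u/\partial\nu(t,y)=c(t)$ forces every $A_k(y)$ to be independent of $y\in\bd\Omega$, by linear independence of the fractional powers $t^{k/2}$ as $t\to 0^+$. The conclusion then follows by induction on $k$. The base case $k=0$ is extracted from constancy of $A_1$, whose leading contribution is proportional to $\eta$, so $\eta|_{\bd\Omega}$ is constant. For the inductive step, suppose $\partial_\nu^{j}\eta|_{\bd\Omega}$ is constant for every $j<m$. Then every tangential derivative of the $\partial_\nu^{j}\eta$ with $j<m$ vanishes, and in $A_{m+1}$ every contribution other than the leading $\partial_\nu^{m}\eta$ is a polynomial in quantities that have already been shown constant (the lower-order $\partial_\nu^{j}\eta$ and the fixed boundary data). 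Hence constancy of $A_{m+1}$ isolates $\partial_\nu^{m}\eta|_{\bd\Omega}$ as constant, closing the induction.

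The hard part is the structural claim about the coefficients $A_k$: both (i) that they are polynomials in $\eta$ and its normal derivatives with the stated dependency range, and (ii) that the coefficient of $\partial_\nu^{k-1}\eta$ in $A_k$ never vanishes. Both facts are proved by constructing a formal parametrix for $u$ in Fermi coordinates adapted to $\bd\Omega$, writing $u(t,s,y)\approx \mathrm{erf}(s/2\sqrt{t})+\sqrt{t}\,u_1(s,y)+t\,u_2(s,y)+\dots$, and solving the resulting hierarchy of transport equations on each normal ray. Tracking precisely how each new normal derivative of $\eta$ enters the next transport equation, and verifying that the resulting linear algebra at every order produces a nondegenerate coefficient in front of $\partial_\nu^{k-1}\eta$, is the combinatorial core of \cite{S2}; once that is secured, the induction above is essentially automatic.
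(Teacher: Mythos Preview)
This theorem is not proved in the present paper; it is quoted from \cite{S3} (Theorem 7 there), and the paper notes that the argument in \cite{S3} rests on the pointwise heat-flow expansion of \cite{S2}. Your outline --- deduce constancy of every coefficient $A_k$ from the constant flow hypothesis, then induct on $k$ using the triangular dependence of $A_k$ on the normal jet of $\eta$ --- is exactly that argument.

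One phrase should be tightened. You list ``the relevant ambient curvature data'' among the constituents of $A_k$ and then, at the inductive step, absorb it into ``the fixed boundary data'' as though it were constant along $\partial\Omega$. On a general manifold it is not. What the parametrix hierarchy actually yields is sharper: every term of $A_k$ carrying a non-universal geometric coefficient (built from the full shape operator $S$ or the ambient curvature tensor, rather than from $\eta$ alone) arises through at least one application of $\Delta^T$ to a lower-order correction, and therefore contains at least one \emph{tangential} derivative of some $\partial_\nu^{j}\eta$ with $j\le k-2$. Under the inductive hypothesis these tangential derivatives all vanish, so those terms drop out entirely; what remains is a universal polynomial in the already-constant $\partial_\nu^{j}\eta$ plus the leading $\alpha_k\,\partial_\nu^{k-1}\eta$ with $\alpha_k\neq 0$. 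With this refinement your induction closes, and the approach matches the paper's cited source.
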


\subsection{On the mean exit time function}

We assume from now on that $\Omega$ is a compact manifold with analytic metric and smooth boundary. By the regularity results in \cite{KN} (which we can apply in our case, see \cite{S3}) the boundary is analytic as well. As remarked in \cite{S3} the function $\eta$ is radial on its domain of definition $\Omega_{\rm reg}$. We will give another proof of this fact in Lemma \ref{met} below. 

\medskip

We will draw the following consequence of Theorem \ref{previous}.  Recall the mean-exit time function $v$, solution of the problem:
$$
\twosystem
{\Delta v=1\quad\text{on}\quad\Omega}
{v=0\quad\text{on}\quad\bd\Omega.}
$$
As $\eta$ has normal derivatives of all orders which are constant on $\bd\Omega$, one proves by the local splitting of the Laplacian near the boundary that also $v$ has normal derivatives of all orders which are constant on the boundary. Analyticity will then imply that $v$ is a radial function. 

\begin{lemme}\label{met} \parte a For all $k\geq 0$ and  $x\in\bd\Omega$ one has 
$
\deriven{k}{v}{\nu}(x)=\tilde c_k
$
where  $\tilde c_k$ is a constant depending only on $k$. 

\item b) The function $v$ is radial, and its restriction  to   $\Omega_{\rm reg}$ can be written $v=\psi\circ \rho$ for a smooth function $\psi:[0,R)\to \reals$.
\end{lemme}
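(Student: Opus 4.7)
The plan is to prove (a) by induction on $k$, using the local splitting \eqref{splitting} of the Laplacian in Fermi coordinates $(r,x)$ around $\partial\Omega$, and then to derive (b) by analytic continuation along normal geodesics. The base cases $k=0$ and $k=1$ are immediate: $v$ vanishes on $\partial\Omega$, while $\partial v/\partial\nu$ is constant on $\partial\Omega$ because Theorem \ref{chfhar} tells us that $\Omega$ is a harmonic domain.

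For the inductive step, assume $\partial_r^j v|_{\partial\Omega} = \tilde c_j$ is constant for $j = 0, \dots, k$, with $k\ge 1$. Writing the equation $\Delta v = 1$ in Fermi coordinates as
\[
-\derivetwo{v}{r} + \eta\,\derive{v}{r} + \Delta^T v = 1,
\]
I would apply $\partial_r^{k-1}$ to both sides (the equation itself suffices when $k=1$) so as to solve for $\partial_r^{k+1} v$ in terms of $\partial_r^j \eta$ for $j\le k-1$, $\partial_r^i v$ for $i\le k$, and $\partial_r^{k-1}(\Delta^T v)$. Restricted to $r=0$, Theorem \ref{previous} ensures that the $\partial_r^j \eta|_{\partial\Omega}$ are all constants, and the inductive hypothesis does the same for the $\partial_r^i v|_{\partial\Omega}$. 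The remaining and most delicate piece is $\partial_r^{k-1}(\Delta^T v)|_{r=0}$, which I expect to be the main technical obstacle. Its vanishing rests on the observation that $\Delta^T$, in local coordinates on $\partial\Omega$, is a second-order tangential operator with coefficients smooth in $(r,x)$; by Leibniz, $\partial_r^{k-1}(\Delta^T v)|_{r=0}$ unfolds into a sum of tangential differential operators (with $x$-dependent coefficients) applied to $\partial_r^i v|_{\partial\Omega}$ for $i\le k-1$. The inductive hypothesis makes each such $\partial_r^i v|_{\partial\Omega}$ constant on $\partial\Omega$, hence killed by any tangential derivative.

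For (b), I would invoke analyticity. Since the metric is analytic, $\partial\Omega$ is analytic by Kinderlehrer--Nirenberg, and $\Delta v = 1$ is an elliptic equation with analytic coefficients, so the mean exit time $v$ is analytic up to $\partial\Omega$. For each $x\in\partial\Omega$, the one-variable function $V_x(r) := v(\Phi(r,x))$ is analytic on $[0,c(x))$, and by part (a) its Taylor expansion at $r=0$ is $\sum_k \tilde c_k\, r^k/k!$, independent of $x$. Hence all $V_x$ coincide with a common power series $\psi(r)$ on a small interval $[0,r_0)$, and the identity theorem for real-analytic functions of one variable extends the agreement $V_x \equiv V_{x'}$ to the intersection of their intervals of analyticity. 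One can then define $\psi(r) := V_x(r)$ for any $x$ with $c(x)>r$ (which exists for every $r<R$ by definition of $R$), obtaining a well-defined smooth function $\psi:[0,R)\to\reals$. Finally, any $y\in\Omega_{\rm reg}$ has the form $y = \Phi(\rho(y),x)$ for such an $x$, so $v(y) = V_x(\rho(y)) = \psi(\rho(y))$, as required.
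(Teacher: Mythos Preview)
Your proof is correct and follows essentially the same route as the paper's: induction on $k$ via the splitting $\Delta v = -\partial_\nu^2 v + \eta\,\partial_\nu v + \Delta^T v$, combined with Theorem~\ref{previous} for the $\eta$ terms and the observation that the tangential Laplacian term contributes nothing at the boundary, then analytic continuation along normal geodesics for part (b). The only cosmetic difference is that the paper packages the induction using a ``level'' formalism (writing $v = \psi\circ\rho + \rho^{k+1}f$ on a collar, so that $\Delta^T v = \rho^{k+1}\Delta^T f$ vanishes to order $k+1$), whereas you argue directly via Leibniz in Fermi coordinates; both handle the $\Delta^T v$ term by the same mechanism.
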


For the proof, we refer to Appendix \ref{apmet}. %
\begin{lemme}\label{critical} \item a) $p$ is a critical point of $v$ if and only if $\rho(p)=R$. In other words, the critical set of $v$ coincides with the set of points at maximum distance to the boundary. 

\item b) The function $\eta$ is radial, that is, on $\Omega_{\rm reg}$ one has $\eta=g\circ\rho$ for a smooth function $g:[0,R)\to\reals$.

\item c) The density function $\theta$ is also radial on $\Omega_{\rm reg}$, that is, $\theta=\theta(r)$, and is positive on $[0,R)$. In particular, any focal point must be at maximum distance $R$ to the boundary. 
\end{lemme}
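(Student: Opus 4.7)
The plan is to deduce all three statements from a single key fact: that $\psi'(r)>0$ for every $r\in[0,R)$, where $\psi:[0,R)\to\reals$ is the smooth function from Lemma \ref{met}(b) satisfying $v=\psi\circ\rho$ on $\Omega_{\mathrm{reg}}$. The Hopf lemma gives $\psi'(0)=\tilde c_1>0$, and by continuity the identity $v=\psi\circ\rho$ extends to all of $\Omega$.

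To establish $\psi'>0$ on $[0,R)$ I would argue by contradiction: suppose $r_0\in(0,R)$ is the smallest zero of $\psi'$, so $\psi'>0$ on $[0,r_0)$. Substituting $v=\psi\circ\rho$ into $\Delta v=1$ via \eqref{splitting}, and noting that $\Delta^T v$ vanishes because $v$ is constant on equidistants, yields the pointwise ODE $-\psi''(\rho)+\eta\,\psi'(\rho)=1$ on $\Omega_{\mathrm{reg}}$. Evaluating at a regular point with $\rho=r_0$ forces $\psi''(r_0)=-1$, hence $\psi'(s)\sim r_0-s$ near $r_0^-$. Combining the ODE with \eqref{thetaprime} gives $(\theta\,\psi')'=-\theta$ along each normal geodesic, which integrates to
\begin{equation*}
\theta(r,x)\,\psi'(r)=\tilde c_1-\int_0^r\theta(s,x)\,ds,\qquad r\in[0,r_0);
\end{equation*}
a separation-of-variables argument using $\theta(0,x)=1$ then shows that $\theta(r,x)$ is in fact independent of $x$ on $[0,r_0)$ and that $\theta(r)$ admits a finite limit as $r\to r_0^-$. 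Moreover, at any $p$ with $\rho(p)<r_0$ one has $\psi'(\rho(p))>0$, so $\psi$ can be locally inverted and $\rho=\psi^{-1}\circ v$ is smooth at $p$; this rules out cut-locus points at distance $<r_0$ and guarantees that $\abs{\rho^{-1}(r)}=\theta(r)\abs{\bd\Omega}$ stays bounded as $r\to r_0^-$. Applying Green's formula on $\{\rho\leq r\}$ for regular $r\in(0,r_0)$ gives $F(r)\doteq\abs{\{\rho\leq r\}}=\tilde c_1\abs{\bd\Omega}-\psi'(r)\abs{\rho^{-1}(r)}$, and passing to the limit $r\to r_0^-$ yields $F(r_0)=\tilde c_1\abs{\bd\Omega}$. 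But Green on all of $\Omega$ gives $\abs{\Omega}=\tilde c_1\abs{\bd\Omega}$, so $F(r_0)=F(R)$, forcing $\{\rho>r_0\}$ to have measure zero by monotonicity of $F$---contradicting that it is a nonempty open set (since $r_0<R=\max\rho$).

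With $\psi'>0$ on $[0,R)$ in hand, part (a) is immediate: on $\Omega_{\mathrm{reg}}$ one has $\abs{\nabla v}=\psi'(\rho)>0$, so no critical point of $v$ lies in $\Omega_{\mathrm{reg}}$; the local inversion above gives ${\rm Cut}_\Omega\subseteq\rho^{-1}(R)$, while the reverse inclusion is immediate since $\abs{\nabla\rho}=1$ on $\Omega_{\mathrm{reg}}$ prevents $\rho$ from attaining its maximum there. For part (b), dividing the ODE by $\psi'(\rho)$ gives $\eta=(1+\psi''(\rho))/\psi'(\rho)=g\circ\rho$ with $g:[0,R)\to\reals$ smooth. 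For part (c), \eqref{thetaprime} becomes $\theta'/\theta=-g(r)$, and integrating with $\theta(0,x)=1$ yields $\theta(r,x)=\exp(-\int_0^r g(s)\,ds)$, independent of $x$ and strictly positive on $[0,R)$; hence any focal point must occur at $r=R$. The main obstacle is the positivity claim $\psi'>0$ on $[0,R)$; the delicate step is verifying that $\abs{\rho^{-1}(r)}$ stays bounded as $r\to r_0^-$ so that Green's identity passes to the limit and yields the desired volume contradiction.
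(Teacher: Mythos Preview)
Your core strategy---Green's formula plus a volume contradiction---is the same as the paper's. The paper takes a critical point $p$ of $v$ closest to $\bd\Omega$, sets $r=\rho(p)$, and applies Green on $\{\rho>r_n\}$ for regular values $r_n\nearrow r$ to get $\abs{\{\rho>r\}}=0$; the needed bound $\abs{\rho^{-1}(r_n)}\leq C$ is obtained from standard Ricci--curvature comparison for $\theta$. Parts (b) and (c) are then derived exactly as you do, by dividing the ODE by $\psi'$ and integrating \eqref{thetaprime}.

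Your argument differs in two interesting ways. First, you bound $\abs{\rho^{-1}(r)}$ intrinsically rather than by comparison: once $\theta$ is shown to be radial on $[0,r_0)$, the fact that $\theta(\cdot,x)$ is \emph{by definition} smooth on $[0,R+\delta]$ gives a finite limit at $r_0$, so no external geometric input is needed. Second, you invoke the identity $\rho=\psi^{-1}\circ v$ (where $\psi'>0$) to conclude that $\rho$ is smooth, hence free of cut points, on $\{\rho<r_0\}$; the paper never needs this step because its comparison bound on $\abs{\rho^{-1}(r_n)}$ does not require $\{\rho<r\}\subseteq\Omega_{\rm reg}$.

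Two points deserve tightening. The claim $\psi''(r_0)=-1$ presupposes a \emph{regular} point at distance exactly $r_0$, which you have not established; fortunately this claim is not actually needed anywhere in your argument and can be dropped. More substantively, there is an ordering issue: your integrated identity $\theta(r,x)\psi'(r)=\tilde c_1-\int_0^r\theta(s,x)\,ds$ along $\gamma_x$ is valid only for $r<c(x)$, so deducing that $\theta$ is independent of $x$ on all of $[0,r_0)$ already requires $c(x)\geq r_0$ for every $x$. The fix is simply to swap the order: first use $\rho=\psi^{-1}\circ v$ to prove $\{\rho<r_0\}\subseteq\Omega_{\rm reg}$ (hence $c(x)\geq r_0$), and \emph{then} integrate to obtain radiality and boundedness of $\theta$. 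With that reordering your proof is complete and somewhat more self-contained than the paper's.
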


\begin{proof} a) It clearly suffices to show that, if $p$ is a critical point of $v$, then $\rho(p)=R$.  In fact, once we have shown that, we see that any point where $v$ attains its absolute maximum must be at distance $R$;  since $v$ is constant on $\rho^{-1}(R)$ we conclude that any point of $\rho^{-1}(R)$ is a maximum of $v$, hence it is critical. 

\smallskip

Then let $p$ be a critical point of $v$ which is closest to $\bd\Omega$ and set $\rho(p)=r$: now $p$ is an interior point because on the boundary $\abs{\nabla v}=\psi'(0)>0$.  If $\gamma$ is a geodesic arc which minimizes distance from $p$ to the boundary, and if $x\in\bd\Omega$ is the foot of $\gamma$, then $v$ is increasing when moving from $x$ to $p$.
As $v$ is radial we see that the equidistant
$\rho^{-1}(r)$ consists entirely of critical points of $v$. Assume that $\rho^{-1}(r)$ is a regular hypersurface. Then, by Green's formula :
$$
\int_{\rho>r}\Delta v=\int_{\rho^{-1}(r)}\scal{\nabla v}{\nabla\rho}=0,
$$
because $\nabla v=0$ on $\rho^{-1}(r)$. As $\Delta v=1$ one would get $\abs{\rho>r}=0$ which can hold only when $r=\max \rho=R$. 

It remains to prove the lemma when $\rho^{-1}(r)$ is not known to be regular. By assumption, $p$ is a critical point of $v$ closest to the boundary. If $v(p)=a$, let $\{a_n\}$ be any increasing sequence  converging to $a$; obviously each $a_n$ is a regular value of $v$. On the geodesic arc $\gamma$, the function $v$ increases from $0$ to $a$: then, there is a sequence of points $\{p_n\in\gamma\}$ converging to $p$ and such that $v(p_n)=a_n$ for all $n$. Set $\rho(p_n)=r_n$: as $\rho^{-1}(r_n)$ is (possibly, a component of) the regular hypersurface $v^{-1}(a_n)$, it is regular as well. We apply Green formula to the domain $\{\rho>r_n\}$ and get:
\begin{equation}\label{levelset}
\int_{\rho>r_n}\Delta v=\int_{\rho^{-1}(r_n)}\scal{\nabla v}{\nabla\rho}=\abs{\rho^{-1}(r_n)}\psi'(r_n)
\end{equation}
because, by the previous lemma, $v=\psi\circ\rho$ hence $\nabla v=(\psi'\circ\rho)\nabla\rho$.  Note that $\psi'(r_n)\to 0$ as $n\to\infty$. 
 Now $\abs{\rho^{-1}(r_n)}$ is uniformly bounded above by a finite constant depending only on $\Omega$; in fact, standard comparison theorems on the density function $\theta$ show that the volume of any level set of $\rho$  can be controlled in terms of the volume of $\bd\Omega$ and : a lower bound of the mean curvature of $\bd\Omega$, a lower bound of the Ricci curvature of $\Omega$ and the inner radius $R$. Taking the limit as $n\to \infty$ in \eqref{levelset} we obtain as before $\abs{\rho>r}=0$ which implies, again,  that $r=R$.

\medskip

We prove b). From formula \eqref{splitting} one sees that on $\Omega_{\rm reg}$ one has:
$$
\eta\derive v{\nu}=\deriven 2v{\nu}-1
$$
because $\Delta^Tv=0$ ($v$ is radial). 
As $v=\psi\circ \rho$ we have  $\derive v{\nu}=\psi'\circ\rho$ and $\deriven 2v{\nu}=\psi''\circ\rho$. Since the critical set of $v$ is at maximum distance to the boundary, one has $\psi'>0$ on the interval $[0,R)$. From the above equation we get
$$
\eta=\dfrac{\psi''-1}{\psi'}\circ\rho\doteq g\circ\rho
$$
with $g$ smooth on $[0,R)$, showing that $\eta$ is indeed radial. 

\medskip

c) Integrating \eqref{thetaprime} and knowing that  $\theta(0,x)=1$ we see that, for all $x\in\bd\Omega$: 
$$
\theta(r,x)=e^{-\int_0^r\eta(\Phi(s,x))\,ds}=e^{-\int_0^rg(s)\,ds}
$$
the last equality following from b). Hence $\theta$ depends only on $r$.  Finally, pick a point $y$ at maximum distance $R$ to the boundary, and observe that $y=\gamma_x(R)$  for some $x\in\bd\Omega$. Any point $\gamma_x(t)$ with $t\in [0,R)$ is a regular point, hence $\theta(r)\doteq\theta(t,x)>0$ for all $t<R$. 

\end{proof}

%%%

\subsection{Proof of Step 1} 

\begin{prop} Let $R$ be the maximum distance of a point of $\Omega$ to the boundary.
Then:
$
{\rm Cut}_{\Omega}=\rho^{-1}(R).
$
\end{prop}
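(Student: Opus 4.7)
The plan is to avoid splitting the cut locus into focal and non-focal points and instead deduce everything from the mean exit time function $v$. By Lemma \ref{met} I can write $v=\psi\circ\rho$ on $\Omega_{\rm reg}$ with $\psi:[0,R)\to\R$ smooth, and Lemma \ref{critical}(a) together with $\psi'(0)>0$ shows that $\psi'>0$ throughout $[0,R)$. The point to exploit is that $v$ is smooth (in fact analytic) on all of $\Omega$, whereas $\rho$ a priori is only smooth on $\Omega_{\rm reg}$; inverting the strictly monotone $\psi$ will push the smoothness of $v$ over to $\rho$ on the complement of $\rho^{-1}(R)$.

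\medskip

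For the inclusion ${\rm Cut}_{\Omega}\subseteq\rho^{-1}(R)$ I would set $M:=\max_{\Omega}v$. Since $v$ is continuous and radial, it takes the constant value $M$ on $\rho^{-1}(R)$, and $\psi$ extends continuously to a monotone bijection $[0,R]\to[0,M]$ whose restriction to $[0,R)$ is smooth with positive derivative. The inverse function theorem therefore yields $\psi^{-1}$ smooth on $[0,M)$. On the open set $U:=\Omega\setminus\rho^{-1}(R)=\{v<M\}$ I define $\tilde\rho:=\psi^{-1}\circ v$, which is smooth on $U$ and agrees with $\rho$ on $U\cap\Omega_{\rm reg}$. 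Since ${\rm Cut}_{\Omega}$ is closed and has measure zero, $U\cap\Omega_{\rm reg}$ is dense in $U$; continuity of $\tilde\rho$ and of $\rho$ (as a distance function) then forces $\rho=\tilde\rho$ throughout $U$. Consequently $\rho$ is smooth on $U$, so $U\subseteq\Omega_{\rm reg}$, i.e.\ ${\rm Cut}_{\Omega}\subseteq\rho^{-1}(R)$.

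\medskip

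For the reverse inclusion, fix $p$ with $\rho(p)=R$. Compactness gives $x\in\bd\Omega$ with $d(p,x)=R$; the first variation formula forces the minimizing segment from $x$ to $p$ to meet $\bd\Omega$ orthogonally at $x$, so $p=\gamma_x(R)$ with $\gamma_x(t)=\exp_x(t\nu(x))$. If we had $\rho(\gamma_x(t_0))<t_0$ at some $t_0<R$, then concatenating a path of length $<t_0$ from $\bd\Omega$ to $\gamma_x(t_0)$ with the arc $\gamma_x|_{[t_0,R]}$ would give a path from $\bd\Omega$ to $p$ of length strictly less than $R$, contradicting $\rho(p)=R$. Hence $\rho(\gamma_x(t))=t$ on $[0,R]$, which means $c(x)\ge R$ and therefore $c(x)=R$, so $p=\Phi(c(x),x)\in {\rm Cut}_{\Omega}$.

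\medskip

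The step I expect to be delicate is the density-and-continuity argument inside the first inclusion: one must be careful that the identity $\rho=\tilde\rho$ really extends from $U\cap\Omega_{\rm reg}$ to all of $U$, which rests on $\rho$ being continuous (a standard property of distance functions) and on the measure-zero cut locus having empty interior. The rest is a clean application of the structural lemmas (Lemma \ref{met} for $v=\psi\circ\rho$, Lemma \ref{critical} for the location of critical points of $v$ and the positivity of $\theta$ on $[0,R)$) already proved in the section.
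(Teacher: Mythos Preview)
Your argument is correct and takes a genuinely different route from the paper's for the inclusion ${\rm Cut}_{\Omega}\subseteq\rho^{-1}(R)$. The paper argues by taking a point $p$ of the cut locus closest to $\bd\Omega$ and invoking the classical dichotomy: either $p$ is a focal point (handled by Lemma \ref{critical}(c), since $\theta$ is radial and positive on $[0,R)$), or $p$ is the midpoint of a geodesic meeting $\bd\Omega$ orthogonally at both ends (then $v$ is even along this geodesic, forcing $\nabla v(p)=0$, and Lemma \ref{critical}(a) gives $\rho(p)=R$). Your approach sidesteps this case split entirely: you invert the strictly monotone $\psi$ and transplant the global smoothness of $v$ to $\rho$ on $\{\rho<R\}$, then conclude that $\rho$ being smooth there forces the cut locus to miss that set. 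This is cleaner and uses only Lemmas \ref{met} and \ref{critical}(a); it does not need Lemma \ref{critical}(c) or the focal/midpoint dichotomy. The price is that you must appeal to the (standard) characterisation of the cut locus as the singular set of the boundary–distance function.

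On that last point: the step you flag as delicate (the density–continuity identification $\rho=\tilde\rho$ on $U$) is actually routine. The step you do \emph{not} flag is the one that carries the weight, namely ``$\rho$ smooth on $U$ implies $U\subseteq\Omega_{\rm reg}$''. This is true, but deserves a sentence: once $\rho=\tilde\rho$ is smooth on $U$, density gives $\lvert\nabla\rho\rvert=1$ there, so the integral curves of $\nabla\rho$ are unit–speed geodesics along which $\rho$ increases at unit rate; hence any normal geodesic reaching a point $p\in U$ continues to minimise distance to $\bd\Omega$ slightly beyond $p$, which means $p$ cannot be a cut point. With this remark added your proof is complete. For the inclusion $\rho^{-1}(R)\subseteq{\rm Cut}_{\Omega}$ your argument and the paper's are essentially the same.
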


\begin{proof} We first prove that
$
\rho^{-1}(R)\subseteq {\rm Cut}_{\Omega}.
$

\smallskip
In fact, assume to the contrary that $\rho(p)=R$ and $p\notin {\rm Cut}_{\Omega}$. Then, as the cut locus is closed, there is a whole neighborhood $U$ of $p$ not meeting the cut locus. Let $\gamma$ be the unique geodesic segment minimizing the distance from the boundary to $p$ and extend $\gamma$ a little bit beyond $p$.  This extended geodesic segment is still minimizing distance to the boundary, because it does not meet the cut-locus, and it has length greater than $R$. This implies that there are points at distance greater than $R$, which contradicts the assumption. 

\smallskip

It remains to show that
$
 {\rm Cut}_{\Omega}\subseteq \rho^{-1}(R).
$
\smallskip

It is enough to show that if $p$ is a point of the cut-locus which is closest to the boundary then  $\rho(p)=R$. 
It is known that, if $p$ minimizes distance from the cut-locus to the boundary, then there are only two possibilities:

\medskip

1.  either $p$ is a focal point or 

\smallskip

2. $p$ is the midpoint of a geodesic starting and ending at the boundary, and meeting the boundary orthogonally. 

\smallskip

{\it First case.}  This is an immediate consequence of Lemma \ref{critical}, part c) (any focal point is at maximum distance to the boundary). 

\smallskip

{\it Second case.} Assume $\rho(p)=r$.  We parametrize $\gamma$ by arc-length $t$ on the interval $[-r,r]$ so that we have
$$
 \gamma(0)=p, \, \gamma(\pm r)\in\bd\Omega.
$$
We know that $v$ depends only on the distance to the boundary, so that if $\psi:[-r,r]\to\Omega$ is the function
$
\psi(t)=v(\gamma (t))
$
then $\psi$ is even : $\psi(t)=\psi(-t)$ for all $t\in [-r,r]$. Hence $\psi'(0)=0$ and the vectors $\nabla v(\gamma(t))$ and $\gamma'(t)$ are collinear for any $t\in [-r,0)$, which implies 
$$
\abs{\psi'(t)}=\abs{\scal{\nabla v(\gamma(t))}{\gamma'(t)}}=
\abs{\nabla v(\gamma(t))}.
$$
Then:
$$
\abs{\nabla v(p)}=\lim_{t\to 0}\abs{\nabla v(\gamma(t))}=\lim_{t\to 0}\abs{\psi'(t)}=\abs{\psi'(0)}=0
$$
Hence $p$ must be a critical point of $v$ and $\rho(p)=R$ as asserted. 

\end{proof}

\subsection{Proof of Step 2.}  The proof follows an argument  in \cite{Wan}. Step 2 will be a consequence of Claims 1 and 2 below.  
\medskip

{\bf Claim 1. \it Each $y_0\in\bd\Omega$ has a neighborhood $U_0$ such that
$
{\rm rk}(d\Phi(y))\geq {\rm rk}(d\Phi(y_0))
$
for all $y\in U_0$. \rm 

\medskip

For the proof, fix orthonormal frames $(e_1,\dots,e_{n-1})$ in $T_{y_0}\bd\Omega$ (resp. $(E_1,\dots E_n)$ in $T_{\Phi(y_0)}\Omega$) and extend them by parallel trasport in a nhbd $W_{y_0}$ of $y_0$ (resp. $W'$ of $\Phi(y_0)$). In these bases, the matrix of $d\Phi(y)$  depends continuously on $y\in W_{y_0}$. It is clear that, if $W_{y_0}$ is sufficiently small one has
$
{\rm rk}(d\Phi(y))\geq {\rm rk}(d\Phi(y_0))
$
for all $y\in W_{y_0}$, showing the claim. 

\medskip

We now show the reverse inequality. The previous argument shows that, if the rank of $\Phi$ at $y_0$ is maximum (that is, equal to $n-1$), then it will be maximum (hence constant) in a neighborhood of $y_0$.
Then, we can assume that $\Phi(y_0)$ (hence every $y\in\bd\Omega$) is a focal point. 
For $y\in\bd\Omega$ let $\gamma_y[0,t]$ be the geodesic segment of length $t$ starting at $y$ and going in the inner normal direction. By the Morse index theorem, the set of focal points on each finite geodesic segment is discrete; by compactness of $\bd\Omega$, there exists $\epsilon >0$ (independent of $y$) such that the geodesic segment
$$
\alpha_y\doteq\gamma_y[0,R+\epsilon]
$$
will have only one focal point, namely, $\Phi(y)$. Its Morse index ${\rm Ind}(\alpha_y)$ is precisely the null space of $d\Phi(y)$. Consequently :

% identified with the dimension of the space of Jacobi fields on $\gamma_y[0,R]$ which are tangent to the boundary at $y$ and vanish at $\Phi(y)$ [clarify and check]: hence, such index is precisely the null space of $d\Phi(y)$. 
$$
{\rm Ind}(\alpha_y)=n-1-{\rm rk} (d\Phi(y))
$$
for all $y\in\bd\Omega$.

\medskip

{\bf Claim 2. \it Each $y_0\in\bd\Omega$ has a neighborhood $V_0$ such that
$
{\rm Ind}(\alpha_y)\geq {\rm Ind}(\alpha_{y_0})
$
for all $y\in V_0$. Consequently, on that neighborhood:
$$
{\rm rk} (d\Phi(y))\leq {\rm rk} (d\Phi(y_0)).
$$\rm

For the proof, we observe that the index form on the geodesic $\alpha_y$ depends continuously on $y$. Recall that the index form is a quadratic form:
$$
Q_y:V(\alpha_y)\times V(\alpha_y)\to \reals,
$$
where $V(\alpha_y)$ is the vector space of piecewise-smooth vector fields which are orthogonal to $\alpha_y$ and tangent to $\bd\Omega$ at $y$; the Morse index of $\alpha_y$ is then the index of $Q_y$, and equals the maximal dimension of a subspace of $V(\alpha_y)$ on which $Q_y$ is negative definite.
Now, if $E$ is a $k$-dimensional subspace of $V(\alpha_{y_0})$ on which $Q_{y_0}$ is negative definite, and if $y$ is a point of $\bd\Omega$ near $y_0$, we can parallel transport the vector fields of $E$  to obtain a $k$-dimensional subspace $\tau(E)$ of $V(\alpha_{y})$; as $Q_y$ depends continuously on $y$, it will still be negative definite on $\tau(E)$ provided that $y$ is close enough to $y$. Hence, the index cannot decrease locally, proving the claim. 

\subsection{Proof of Step 3} 

\nero We say that $p$ is a {\it smooth point} of $P$ if there exists an open nghbd $V$ of $p$ in $\Omega$ such that $V\cap P$ is  a smooth $k$-dimensional submanifold of $\Omega$.
Clearly $P$ is a smooth submanifold if and only if every point of $P$ is smooth. 

\medskip

We wish to show that any point $p_0\in P$ is smooth. Fix one such point, and pick $x_0\in\bd \Omega$ such that $p_0=\Phi(x_0)$ (recall that $\Phi$ is surjective). By Step 2,  the rank of $\Phi$ is locally constant; then,  by the constant rank theorem, we can find a neighborhood $U$ of $x_0$ such that $W\doteq\Phi(U)$ is a $k$-submanifold of $\Omega$, and $p_0\in W$. 
Next we claim

\begin{prop}\label{tub} Let $U_{\epsilon}(W)$ denote the open $\epsilon$-nghbd of $W$. Then, if $\epsilon>0$ is small enough one has :
$$
U_{\epsilon}(W)\cap P=W.
$$
\end{prop}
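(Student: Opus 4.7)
My plan is to argue by contradiction. Suppose the conclusion fails: then for every $\epsilon>0$ there is a point of $P\setminus W$ within distance $\epsilon$ of $W$, and one can extract a sequence $q_n\in P\setminus W$ with $q_n\to p_0$ for some $p_0\in W$. Since $P=\Phi(\bd\Omega)$ by Step 1, write $q_n=\Phi(y_n)$ with $y_n\in\bd\Omega$. By compactness of $\bd\Omega$, a subsequence $y_n\to y_\infty$ satisfies $\Phi(y_\infty)=p_0$.

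If $y_\infty\in U$ (the open set on which $W=\Phi(U)$ was built via the constant-rank theorem of Step 2), then $y_n\in U$ for large $n$, so $q_n\in\Phi(U)=W$, contradicting $q_n\notin W$. Hence the substantive case is $y_\infty\in\bd\Omega\setminus U$. In this case I would apply Step 2 again at $y_\infty$ to obtain a neighborhood $U'$ of $y_\infty$ on which $\Phi$ has constant rank, producing a second smooth submanifold $W':=\Phi(U')$ of $\Omega$ through $p_0$; for large $n$, $y_n\in U'$ and hence $q_n\in W'$. The proposition therefore reduces to the germ-coincidence claim that two local smooth sheets $W,W'$ of $P$ through the common point $p_0$ must coincide as germs at $p_0$.

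For the germ-coincidence I would lean on the analytic/overdetermined data. By Lemma \ref{met}, the mean-exit-time function $v$ is analytic on the interior of $\Omega$ and satisfies $v=\psi\circ\rho$ on $\Omega_{\rm reg}$ with $\psi$ strictly increasing on $[0,R)$; in particular $P$ coincides exactly with the analytic level set $v^{-1}(m)$, where $m=\max_{\Omega} v$. Both $W$ and $W'$ lie inside $\{v=m\}$ and pass through $p_0$, so differentiating $v\equiv m$ along curves in each gives $T_{p_0}W,\,T_{p_0}W'\subseteq\ker{\rm Hess}(v)(p_0)$. I would then compare the Jacobi-field structures along the two normal geodesics $\gamma_{x_0}$ and $\gamma_{y_\infty}$ arriving at $p_0$: by the radial formula and the smoothness of the equidistants $\rho^{-1}(r)$ for $r<R$ (Lemma \ref{critical}(c)), these Jacobi fields are governed by the same radial ODE, so the kernels of $d\Phi$ at $x_0$ and $y_\infty$ should correspond, yielding $T_{p_0}W=T_{p_0}W'$. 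Analyticity of $\Phi$ combined with the inclusion of both $W$ and $W'$ in the analytic set $v^{-1}(m)$ should then force $W=W'$ as germs at $p_0$.

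The main obstacle is clearly this last germ-coincidence step: converting ``two smooth analytic sheets through $p_0$ with the same tangent space'' into ``the same analytic germ''. This is the point where the overdetermined, radial and analytic structure of the problem must be exploited most strongly; the rest of the argument (compactness, passage to a subsequence, second application of constant rank) is essentially formal.
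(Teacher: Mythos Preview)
Your proposal has a genuine gap, and it is precisely the one you flag yourself. The implication ``two analytic submanifolds through $p_0$ with the same tangent space, both contained in the analytic set $v^{-1}(m)$, must coincide as germs'' is simply false in general (think of the $x$-axis and the parabola $y=x^2$ inside $\{y^2-x^2y=0\}$). Your Jacobi-field heuristic does not rescue this: the Jacobi equation along $\gamma_{x_0}$ and along $\gamma_{y_\infty}$ is governed by the full curvature tensor along each geodesic, not just by the radial density $\theta(r)$, so there is no reason the kernels of $d\Phi$ at the two boundary points should ``correspond''. At best you can deduce that $\dim W=\dim W'$ (from Step~2) and that $T_{p_0}W,\,T_{p_0}W'\subseteq\ker\nabla^2v(p_0)$, but this does not give germ equality.

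The paper proceeds quite differently and avoids the germ question altogether. Using the two lemmas immediately preceding the proposition, it shows that for every $q\in W$ the entire normal space $N_qW$ lies in the eigenspace of $\nabla^2v(q)$ associated with the \emph{negative} eigenvalue $\mu=-\frac{1}{d+1}$ (here $d$ is the vanishing order of $\theta$ at $R$). This is obtained by noting that each unit normal to $W$ at $q$ is of the form $\sigma(x)=\gamma_x'(R)$ for some $x\in\Phi^{-1}(q)\cap U$, and that $\sigma(x)$ is a $\mu$-eigenvector of $\nabla^2v$; an invariance-of-domain argument then upgrades ``an open set of normal directions'' to ``all normal directions''. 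Once $\nabla^2v$ is known to be negative definite on $N_qW$, a second-order Taylor expansion of $t\mapsto v(\exp_q(tX))$ shows directly that $v<m$ on $U_\epsilon(W)\setminus W$ for small $\epsilon$, hence $P\cap U_\epsilon(W)=W$. Note that this Hessian computation is exactly the missing ingredient that would have made your contradiction framework unnecessary: it forces $\ker\nabla^2v(p_0)=T_{p_0}W$ and, more importantly, gives the Morse--Bott nondegeneracy that pins down $\{v=m\}$ locally as $W$ itself.
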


Clearly, Step 3 follows from the above Proposition, because,  taking $V=U_{\epsilon}(W)$, we see that $p_0$ is a smooth point of $P$. We state two lemmas. 

%%%

\begin{lemme} Consider the Taylor expansion of $\theta(r)$ at $r=R$:
\begin{equation}\label{taylor}
\theta(r)=c(R-r)^d+O((R-r)^{d+1}), \quad c\ne 0,
\end{equation}
where $d$ is a non-negative integer (the order of vanishing of $\theta$ at $R$). Write $v=\psi\circ\rho$, where $\psi$ is smooth on $[0,R)$. Then, for all $r\in [0,R)$:
$$
\psi'(r)=\dfrac{\int_r^R\theta(s)\,ds}{\theta(r)}\quad\text{and then}\quad 
\lim_{r\to R}\psi''(r)=-\dfrac{1}{d+1}.
$$
\end{lemme}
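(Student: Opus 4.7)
The plan is first to derive the formula for $\psi'(r)$ from Green's formula applied to the mean exit time equation $\Delta v=1$, and then to compute the limit of $\psi''$ by substituting the given Taylor expansion of $\theta$ into a direct differentiation of that formula.

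For the formula for $\psi'$, I would fix $r\in[0,R)$. Since Step 1 gives ${\rm Cut}_{\Omega}=\rho^{-1}(R)$, the level set $\rho^{-1}(r)$ lies entirely inside the regular set $\Omega_{\rm reg}$ and is a smooth hypersurface, so $\{\rho>r\}$ has smooth boundary with outer unit normal $-\nabla\rho$. On $\Omega_{\rm reg}$ one has $v=\psi\circ\rho$, hence $\nabla v=(\psi'\circ\rho)\nabla\rho$; this is exactly the computation already used in Lemma \ref{critical}. Green's formula together with $\Delta v=1$ then yields
$$\abs{\{\rho>r\}}=\int_{\rho>r}\Delta v=\int_{\rho^{-1}(r)}\scal{\nabla v}{\nabla\rho}=\psi'(r)\,\abs{\rho^{-1}(r)}.$$

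Next I would express both volumes in normal coordinates based on $\bd\Omega$. By Step 1 the cut-radius function is identically $c(x)=R$, so $\Phi$ realizes $\Omega_{\rm reg}$ as $[0,R)\times\bd\Omega$ with density $\theta(r,x)$; and Lemma \ref{critical} part (c) says $\theta$ is radial on $\Omega_{\rm reg}$. Consequently
$$\abs{\rho^{-1}(r)}=\theta(r)\abs{\bd\Omega}\quad\text{and}\quad \abs{\{\rho>r\}}=\abs{\bd\Omega}\int_r^R\theta(s)\,ds,$$
and dividing the two identities gives the claimed formula $\psi'(r)=\int_r^R\theta(s)\,ds/\theta(r)$.

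For the limit of $\psi''$, I would differentiate once more to obtain
$$\psi''(r)=-1-\dfrac{\theta'(r)\int_r^R\theta(s)\,ds}{\theta(r)^2}.$$
Substituting $\theta(r)=c(R-r)^d+O((R-r)^{d+1})$, one gets $\theta'(r)=-cd(R-r)^{d-1}+O((R-r)^d)$ and $\int_r^R\theta(s)\,ds=\tfrac{c}{d+1}(R-r)^{d+1}+O((R-r)^{d+2})$. In the second summand the leading terms give $(R-r)^{2d}$ in both numerator and denominator, with ratio tending to $-d/(d+1)$, so $\psi''(r)\to -1+d/(d+1)=-1/(d+1)$. There is no serious obstacle here; the only observation really needed from the earlier work is that $c(x)\equiv R$ on $\bd\Omega$, which is immediate from Step 1 since each cut-point of a boundary normal geodesic must have $\rho=R$.
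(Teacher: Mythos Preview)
Your proof is correct and follows essentially the same route as the paper: Green's formula on $\{\rho>r\}$ combined with the radiality of $v$ and $\theta$ yields $\psi'(r)=\abs{\Omega_r}/\abs{\bd\Omega_r}=\int_r^R\theta/\theta(r)$, and then the limit of $\psi''$ is extracted from the Taylor expansion of $\theta$. The paper simply declares this last computation ``straightforward'' without writing it out, whereas you give the explicit differentiation $\psi''=-1-\theta'\int_r^R\theta/\theta^2$ and the leading-order balance; this is a welcome clarification but not a different idea.
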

\begin{proof} For $r<R$ set:
$$
\Omega_r=\{\rho>r\}, \quad\bd\Omega_r=\{\rho=r\}.
$$
Now $\nabla v=(\psi'\circ\rho)\nabla\rho$ and $\nabla\rho$ is the inner unit normal to $\bd\Omega_r$. As
$
\int_{\Omega_r}\Delta v=\int_{\bd\Omega_r}\frac{\bd v}{\bd N}
$
we see that
$$
\psi'(r)=\dfrac{\abs{\Omega_r}}{\abs{\bd\Omega_r}}.
$$
Now 
$$
\abs{\bd\Omega_r}=\int_{\bd\Omega}\theta(r)dv_{\bd\Omega}=\abs{\bd\Omega}\theta(r)\quad\text{and hence}\quad 
\abs{\Omega_r}=\abs{\bd\Omega}\int_r^R\theta.
$$
Then 
$
\psi'(r)=\dfrac{\int_r^R\theta}{\theta(r)}
$
and the calculation of the limit is straightforward from \eqref{taylor}. 
\end{proof}

Again let $\gamma_x:[0,R]\to\Omega$  be the geodesic such that $\gamma_x(0)=x$ and $\gamma'_x(0)=\nu(x)$. Define a  map $\sigma: \bd\Omega\to UN(\Omega)$ by
\begin{equation}\label{sigma}
\sigma(x)=\gamma'_x(R).
\end{equation}
Then $\sigma(x)$ belongs to $T_{\Phi(x)}\Omega$.

\begin{lemme}\label{two} $\sigma(x)$ is an eigenvector of $\nabla^2v$ associated to the eigenvalue $\mu=-\frac{1}{d+1}$, for all $x\in\bd\Omega$.
\end{lemme}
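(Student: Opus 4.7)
\textbf{Proof plan for Lemma \ref{two}.} The key observation is that the mean exit time function $v$ is smooth on all of $\Omega$ (the elliptic boundary value problem has a global smooth solution), so $\nabla^2 v$ is a continuous symmetric tensor field defined everywhere, including on the cut locus $P$. In contrast, the representation $v = \psi \circ \rho$ holds only on the regular set $\Omega_{\rm reg}$, and $\psi'', \psi'$ may blow up as $r \to R$. The strategy is therefore to establish the eigenvector equation at every regular point along the normal geodesic $\gamma_x$, and then pass to the limit $r \to R^-$, using continuity of $\nabla^2 v$ on the ambient side, and the previous lemma on the ODE side.

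First, I would work on $\Omega_{\rm reg}$. Let $N = \nabla \rho$, which is a unit vector field tangent to the normal geodesics $\gamma_x$. Since the integral curves of $N$ are unit-speed geodesics, $\nabla_N N = 0$. Now
\[
\nabla v = (\psi' \circ \rho)\, N,
\]
so differentiating in the $N$ direction,
\[
\nabla_N \nabla v = (\psi'' \circ \rho)\, N(\rho)\, N + (\psi' \circ \rho)\, \nabla_N N = (\psi'' \circ \rho)\, N,
\]
using $N(\rho) = |\nabla \rho|^2 = 1$ and $\nabla_N N = 0$. This says that at every point $\gamma_x(r)$ with $r < R$, the unit tangent $\gamma_x'(r) = N(\gamma_x(r))$ is an eigenvector of the Hessian operator $Y \mapsto \nabla_Y \nabla v$ with eigenvalue $\psi''(r)$.

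Next, I would let $r \to R^-$ along the fixed geodesic $\gamma_x$. By smoothness of the exponential map, $\gamma_x'(r) \to \gamma_x'(R) = \sigma(x) \in T_{\Phi(x)} \Omega$. Since $v \in C^\infty(\Omega)$, the Hessian operator $\nabla^2 v$ depends continuously on the base point on all of $\Omega$, so $(\nabla^2 v)_{\gamma_x(r)} (\gamma_x'(r))$ converges to $(\nabla^2 v)_{\Phi(x)} (\sigma(x))$. On the other hand, by the preceding lemma,
\[
\lim_{r \to R^-} \psi''(r) = -\frac{1}{d+1}.
\]
Combining, $(\nabla^2 v)_{\Phi(x)} (\sigma(x)) = -\tfrac{1}{d+1}\, \sigma(x)$, which is the claimed eigenvector relation with $\mu = -\tfrac{1}{d+1}$.

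The main obstacle I anticipate is purely a justification issue rather than a deep obstacle: one must be sure that $\nabla^2 v$ really does extend smoothly across $P$ so that the limiting argument is valid. This rests on the fact that $v$ solves $\Delta v = 1$ on $\Omega$ with smooth (in fact analytic) boundary data, hence $v$ is smooth up to the boundary; no delicate analysis at $P$ is needed for the Hessian of $v$ (even though $\rho$ and $\psi$ degenerate there). Once this is in hand, the proof is just the continuity argument above.
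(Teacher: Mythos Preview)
Your proposal is correct and essentially identical to the paper's own proof: compute $\nabla_N\nabla v=(\psi''\circ\rho)N$ on $\Omega_{\rm reg}$ using $\nabla_N N=0$, then let $r\to R^-$ along $\gamma_x$ invoking continuity of $\nabla^2 v$ and the limit $\psi''(r)\to -\tfrac{1}{d+1}$ from the preceding lemma. If anything, you are slightly more explicit than the paper in justifying why the continuity argument is legitimate at points of $P$.
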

\begin{proof} Note that $\gamma_x$ is an integral curve of $v$. For $r\in [0,R)$ we write $T(r)=\gamma'_x(r)=\nabla\rho(\gamma_x(r))$ and observe that
$
\nabla v=(\psi'\circ\rho)T.
$
Then:
$$
\nabla_T\nabla v=\nabla_T((\psi'\circ\rho)T)=(T\cdot(\psi'\circ\rho)) T+(\psi'\circ\rho)\nabla_TT
$$
Now $T\cdot(\psi'\circ\rho)=\psi''\circ\rho$ and $\nabla_TT=0$ hence
$$
\nabla_T\nabla v=(\psi''\circ\rho)T.
$$
This shows that $T(r)$ is an eigenvector of $\nabla^2v$ associated to the eigenvalue $\psi''(r)$. This holds for all $r<R$, and by continuity it holds also when $r\to R$. As $T(R)=\sigma(x)$, and $\psi''(r)\to \mu$ by the previous lemma, we see
$
\nabla_{\sigma(x)}\nabla v=\mu\sigma(x)
$
and the assertion follows. 
\end{proof}

%%%%

{\bf Proof of Proposition \ref{tub}.}  

\begin{proof} We retain the notation given before the proposition and recall that, if $m$ is the (absolute) maximum of $v$ in $\Omega$, then $P=v^{-1}(m)$ (Lemma \ref{critical} and Step 1).   The aim is to show that, if $y\in U_{\epsilon}(W)\setminus W$ then $v(y)<m$:
this implies  $y\notin P$ and the proposition follows. 

\smallskip

For all $q\in W$ we have the splitting $T_q\Omega=T_qW\oplus N_qW$, where $N_qW$ is the normal space at $q$. 
Let $E_q(\mu)\subseteq T_q\Omega$ be the eigenspace of $\nabla^2v$ associated to $\mu$. We want to show that
\begin{equation}\label{emu}
N_q(W)\subseteq E_q(\mu).
\end{equation}
In fact, consider the subset of $\bd\Omega$ given by
$$
F=\Phi^{-1}(q)\cap U.
$$
If the rank of $\Phi$ on $U$ is $k$, then $F$  is an $(n-k-1)$-dimensional  open submanifold of $\bd\Omega$. If $x\in F$, then $\sigma(x)=\gamma'_x(R)$ is normal to $W$, because the geodesic $\gamma_x$ is an integral curve of $v$ and $W\subseteq P$, hence $v$ is constant on $W$. Then $\sigma$ restricts to a map
$$
\sigma:F\to UN_q(W).
$$
Since $T_qW$ is $k$-dimensional, we see that $UN_q(W)$ is $(n-k-1)$-dimensional, hence $F$ and $UN_q(W)$ have the same dimension.  By the uniqueness of geodesics, $\sigma$ is injective. Thus, by  invariance of domain:

\nero $\sigma(F)$ is open in $UN_q(W)$.

\medskip

From the previous lemma we know that $\sigma(F)\subseteq UE_q(\mu)$, hence $UE_q(\mu)$ contains an open subset of 
$UN_q(W)$. Now $E_q(\mu)$ is the cone over $UE_q(\mu)$; taking the respective  cones one sees that the subspace $E_q(\mu)$ contains an open subset of $N_q(W)$, hence it must contain the whole of  $N_q(W)$. In conclusion, we showed that for all $y\in W$ one has \eqref{emu}. 

\smallskip

We can now finish the proof. Given  $y\in U_{\epsilon}(W)\setminus W$, let $q\in W$ be the foot of the geodesic minimizing the distance to $W$. We write $y=\gamma_X(t)$ for some $t\in (0,\epsilon)$, where $\gamma_X$ is the geodesic such that
$$
\gamma_X(0)=q, \quad \gamma_X'(0)=X\in UN_q(W).
$$
By \eqref{emu}, $X\in E_q(\mu)$. Let $f_X(t)=v(\gamma_X(t))$. Then:
$$
f_X(0)=m,\quad f_X'(0)=0,\quad f_X''(0)=\nabla^2v(X,X)=\mu=-\dfrac{1}{d+1}<0
$$
and Taylor formula at $t=0$ writes:
$$
f_X(t)=m-\frac{1}{d+1} t^2+O(t^3)
$$
where $O(t^3)$ depends on $X\in UN_q(W) $.  However it is clear, using a compactness argument, that if $\epsilon>0$ is small enough, then 
$
f_X(t)<m
$
for all $0<t<\epsilon$ and $X\in UN(W)$. With that choice of $\epsilon$, one has $v(y)<m$ for all $y\in U_{\epsilon}(W)\setminus W$. The proposition follows. 

\end{proof}

%%%

\subsection{Proof of Step 4.} It is enough to show that, for all $r\in (0,R)$ one has:
\begin{equation}\label{equidistant}
\{x\in\Omega: d(x,P)=r\}=\{x\in\Omega: d(x,\bd\Omega)=R-r\}.
\end{equation}
Then, the family of equidistants to $P$ coincides with the family of equidistants to $\bd\Omega$; as each of these is a smooth hypersurface with constant mean curvature the assertion follows. 

\medskip

The proof of \eqref{equidistant} is clear: as $P$ is the set of points at distance $R$ to the boundary, and since $P$ is a smooth submanifold, we see that any point $x\in\Omega\setminus P$ belongs to a unique geodesic arc $\gamma$ meeting $\bd\Omega$ and $P$ orthogonally, and having total length $R$. The geodesic subarcs $\gamma_1\subseteq \gamma$, joining $P$ with $x$, and $\gamma_2\subseteq\gamma$, joining $\bd\Omega$ with $x$, have respective lengths $r$ and $R-r$, and obviously minimize the respective distance. \eqref{equidistant} follows.

%%%

%%%%%%%%%%%%%%%%%%%%%%%%%%%

\section{Appendix} \label{appendix}

\subsection{Smoothness of radial functions}\label{radial}

Let  $\Omega$ be a smooth tube around the smooth submanifold $P$, and let $\rho:\Omega\to\reals$ be the distance function to $P$. 
\begin{lemme} Consider a radial function $f$ on $\Omega$, such that
$f=\hat f\circ\rho$
where $\hat f:[0,R]\to\reals$ is smooth and has vanishing derivatives of odd orders at zero.  Then $f$ is smooth everywhere on $\Omega$.
\end{lemme}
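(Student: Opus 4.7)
The plan is to reduce the problem, via Fermi (tubular neighbourhood) coordinates around $P$, to the classical fact that a radial function on Euclidean space is smooth at the origin precisely when its one-variable profile is even and smooth. Outside $P$ the distance function $\rho$ is already smooth, so $f=\hat f\circ\rho$ is smooth on $\Omega\setminus P$ by composition. The only work is to prove smoothness at points of $P$, and by the local nature of the question it suffices to argue in a single Fermi chart.

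Fix $p\in P$ and choose a local orthonormal frame $(E_1,\dots,E_m)$ of the normal bundle $NP$ over a coordinate patch $(x_1,\dots,x_k)$ of $P$ about $p$, where $m={\rm codim}(P)=n-k$. The normal exponential map gives a diffeomorphism
\[
\Psi(x_1,\dots,x_k,v_1,\dots,v_m)=\exp_{x}\!\Big(\sum_{j=1}^m v_j E_j(x)\Big)
\]
from a neighbourhood of $(p,0)$ in $\mathbb{R}^k\times\mathbb{R}^m$ onto a neighbourhood of $p$ in $\Omega$, and for $|v|$ sufficiently small one has $\rho(\Psi(x,v))=|v|=\sqrt{v_1^2+\cdots+v_m^2}$ by the defining property of the normal exponential. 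Hence smoothness of $f$ near $p$ is equivalent to smoothness of the map $(x,v)\mapsto \hat f(|v|)$ near $(x_0,0)$ in $\mathbb{R}^k\times\mathbb{R}^m$.

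At this point I would invoke the classical one-variable lemma: if $\hat f\in C^\infty([0,R])$ has vanishing derivatives of all odd orders at $0$, then $\hat f$ extends to an even $C^\infty$ function on $[-R,R]$, and consequently there exists $h\in C^\infty([0,R^2])$ with $\hat f(r)=h(r^2)$. (The even extension is built by a Borel-type argument to realise the prescribed Taylor series at $0$ and then correcting by a flat function; the passage from an even smooth function of $r$ to a smooth function of $r^2$ is standard.) Granted this lemma,
\[
f(\Psi(x,v))=\hat f(|v|)=h(v_1^2+\cdots+v_m^2),
\]
which is manifestly $C^\infty$ in $(x,v)$ because $|v|^2$ is a polynomial and $h$ is smooth. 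Combined with smoothness off $P$ this proves $f\in C^\infty(\Omega)$.

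The only substantive step is the classical even-function lemma, which I would simply cite; everything else is the standard transfer from $\Omega$ to Fermi coordinates via the tubular neighbourhood theorem.
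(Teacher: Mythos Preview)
Your proof is correct and shares the paper's overall architecture: reduce via Fermi (tubular) coordinates around $P$ to the Euclidean model where $\rho=\sqrt{x_{k+1}^2+\cdots+x_n^2}$, and then handle the Euclidean case. The difference lies in that last step. The paper argues directly by induction on the $C^k$ class, using that $\hat F(r)\doteq \hat f'(r)/r$ (with value $\hat f''(0)$ at $r=0$) is again smooth with vanishing odd derivatives at $0$, so that $\partial_{x_j}f=(\hat F\circ\rho)\,x_j$ inherits one more degree of regularity at each stage. You instead invoke the Whitney--Glaeser fact that an even $C^\infty$ function of $r$ is a $C^\infty$ function of $r^2$, writing $\hat f(r)=h(r^2)$ and concluding $f=h(|v|^2)$ is smooth. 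Your route is shorter and conceptually cleaner if one is willing to quote that lemma; the paper's route is self-contained. One minor remark: your parenthetical about building the even extension by a Borel-type argument is unnecessary---since all odd derivatives of $\hat f$ vanish at $0$, the naive reflection $r\mapsto \hat f(|r|)$ is already $C^\infty$ on $(-R,R)$, and it is this even extension to which the $r\mapsto r^2$ lemma applies.
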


We first prove the Lemma when $P$ is a $k$-dimensional plane in $\real n$, then we prove the  general case by using Fermi coordinates in a neighborhood of any point of $P$. 

So, let $P$ be a $k$-dimensional plane in $\real n$, where $k=0,\dots, n-1$. We can fix coordinates so that 
$$
P: \, x_{k+1}=\dots =x_n=0,
$$
and therefore 
$$
\rho(x)=\sqrt{x_{k+1}^2+\cdots+x_n^2}.
$$
As $\rho$ is continuous, it is clear that $f(x)=\hat f(\rho(x))$ is also continuous. 
We use the easily proven fact that, under the assumptions on $\hat f$,  the function $\hat F:[0,R]\to\reals$
$$
\hat F(r)=\twosystem{\dfrac{\hat f'(r)}{r}\quad\text{if}\quad r>0}
{\hat f''(0)\quad\text{if}\quad r=0}
$$
is smooth on $[0,R]$ and even at zero. One sees that $\derive{f}{x_i}=0$ everywhere for all $i=1,\dots,k$, and
\begin{equation}\label{smart}
\derive {f}{x_j}=\twosystem
{(\hat F\circ\rho)x_j\quad\text{if}\quad x\notin P, \, j=k+1,\dots,n}
{0\quad\text{if}\quad x\in P}
\end{equation}
which shows that $f$ is $C^1$ everywhere. We now prove any  $f$ as in the hypothesis of the lemma is $C^k$-smooth for all $k$ by induction on $k$. The statement is true for $k=1$; then, assume that the statement is true for the integer $k$.  We apply the inductive hypothesis to  $\hat F\circ\rho$ (we can do that because it is even at $0$);  as  $\hat F\circ\rho$ is $C^k$, equation\eqref{smart} shows that $\derive{f}{x_j}$ is also $C^k$ for all $j$, being the product of two $C^k$ functions. Then $f$ is $C^{k+1}$, as asserted, which completes the induction process : $f$ is $C^{\infty}$-smooth. 

\smallskip

For the extension to the Riemannian case, we use Fermi coordinates which we recall here.
Let $p$ be a point of $P$ and $U$ a neighborhood of $p$ in $P$, on which we can introduce normal coordinates $(x_1,\dots,x_k)$. Let $(e_1,\dots,e_k)$ be an orthonormal basis of $T_pP$, and let  $(\nu_1,\dots,\nu_{n-k})$ be an orthonormal basis of $T^{\perp}_pP$, which we can extend by parallel transport in the normal bundle along any radial geodesics starting at $p$. We thus obtain a local orthonormal frame  $(\nu_1,\dots,\nu_{n-k})$ in $T^{\perp}U$. 

\smallskip

Fix $\epsilon>0$ and small, and consider the open tube $W$ of radius $\epsilon$ around $U$. If $x\in W$, we consider the point $q\in U$ closest to $x$, and assume that it has normal coordinates $(x_1,\dots,x_k)$. If $\epsilon$ is small enough, for each such $x\in W$ there exists a unique vector $\xi\in T^{\perp}_qP$ such that $x=\exp_q\xi$. One can write
$$
\xi=x_{k+1}\nu_1+\dots+x_n\nu_{n-k}.
$$
The Fermi coordinates of $x\in W$ are then, by definition, 
$$
(x_1,\dots,x_k,x_{k+1},\dots,x_n).
$$
Now it is clear that on $W$ we have
$$
P: x_{k+1}=\dots=x_n=0, \quad 
\rho(x)=\sqrt{x_{k+1}^2+\dots+x_n^2}.
$$
If  $f=\hat f(\rho(x))$ is a radial function with the above properties one can apply the argument in Euclidean space and conclude.

%%%%%

\subsection{Proof of formula \eqref{levelint}}\label{appone} Let $\epsilon$ be small and positive, and let 
$$
\Omega_{r,\epsilon}=\{x\in\Omega: r\leq \rho(x)\leq r+\epsilon\}.
$$
Denote by $N$ the inner unit normal to $\bd\Omega_{r,\epsilon}$, so that $N=\nabla\rho$ on 
$\rho^{-1}(r)$ and $N=-\nabla\rho$ on $\rho^{-1}(r+\epsilon)$. Then:
\begin{equation}\label{one}
\begin{aligned}
\int_{\rho^{-1}(r+\epsilon)}f-\int_{\rho^{-1}(r)}f&=-\int_{\rho^{-1}(r+\epsilon)}f\derive \rho{N}-\int_{\rho^{-1}(r)}f\derive \rho{N}\\
&=-\int_{\bd\Omega_{r,\epsilon}}f\derive\rho N\\
&=\int_{\Omega_{r,\epsilon}}\scal{\nabla f}{\nabla\rho}-\int_{\Omega_{r,\epsilon}}f\Delta\rho
\end{aligned}
\end{equation}
By the co-area formula:
\begin{equation}\label{two}
\lim_{\epsilon\to 0}\dfrac 1{\epsilon}\int_{\Omega_{r,\epsilon}}\scal{\nabla f}{\nabla\rho}=
\lim_{\epsilon\to 0}\dfrac 1{\epsilon}\int_r^{r+\epsilon}\Big(\int_{\rho^{-1}(s)}\scal{\nabla f}{\nabla\rho}\Big)\,ds=\int_{\rho^{-1}(r)}\scal{\nabla f}{\nabla\rho}
\end{equation}
Similarly one gets:
\begin{equation}\label{three}
\lim_{\epsilon\to 0}\dfrac 1{\epsilon}\int_{\Omega_{r,\epsilon}}f\Delta\rho=\int_{\rho^{-1}(r)}f\Delta\rho
\end{equation}

From \eqref{one},\eqref{two},\eqref{three} one gets
$$
\lim_{\epsilon\to 0}\Big(\int_{\rho^{-1}(r+\epsilon)}f-\int_{\rho^{-1}(r)}f\Big)=\int_{\rho^{-1}(r)}\scal{\nabla f}{\nabla\rho}-\int_{\rho^{-1}(r)}f\Delta\rho
$$
as asserted. The same argument can be applied when $\epsilon<0$. 

%%%%%%%%%%

\subsection{Proof of Lemma \ref{met}} \label{apmet}  We first recall some definitions and facts used in the proof of Proposition 17 in \cite{S3}.  Fix $\epsilon>0$ and small so that the collar neighborhood of $\bd\Omega$
$$
U=\{x\in\Omega : \rho(x)<\epsilon\}
$$
does not contain points in the cut-locus. Set $\nu=\nabla\rho$. We say that $\phi\in C^{\infty}(U)$ has {\it level $k$} if $k$ is the largest integer (including possibly $k=+\infty$) such that $\phi,\derive{\phi}{\nu},\dots,\deriven{k}{\phi}{\nu}$ restrict to constant functions on $\bd\Omega$. By convention, if $\phi|_{\bd\Omega}$ is not constant we say that $\phi$ has level $-\infty$; clearly, if $\phi$ is radial then it has level $+\infty$. 

\medskip

By arguing with Taylor expansion along the geodesic exiting a given boundary point, and going in the normal direction, one sees  that $\phi\in C^{\infty}(U)$ has level at least $k$ if and only if there exist smooth functions $\psi:[0,\epsilon)\to \reals$ and $f\in C^{\infty}(U)$ such that one has on $U$:
\begin{equation}\label{level}
\phi=\psi\circ\rho+\rho^{k+1}f.
\end{equation}
This has the following consequences:

\nero If $\phi$ has level at least $k$, then $\derive\phi\nu$ has level at least $k-1$ and $\Delta^T\phi$ has level at least $k$.

\medskip

In fact, the first assertion is clear; for the second, knowing that $\phi$ satisfies \eqref{level} one sees that $\Delta^T\phi=\rho^{k+1}\Delta^Tf$, showing the claim. 

\medskip

We now proceed to prove (by induction on $k$) that $v$ has level at least $k$ for all $k$. This  will imply the first part of the Lemma.

First, observe that, as $\Omega$ has the constant flow property, it is also harmonic by Theorem \ref{chfhar}, hence $\derive{v}{\nu}$ is constant on $\bd\Omega$ and $v$ has level at least one.
The assertion is then true for $k=1$.
Now assume that $v$ has level at least $k$: we need to show that then it has level at least $k+1$.
Recall  the identity
$$
\deriven{2}{v}{\nu}=\eta\derive v{\nu}-1+\Delta^Tv.
$$
We know from Theorem \ref{previous} that $\eta$ has level $+\infty$. Then one sees easily from the above formula that $\deriven{2}{v}{\nu}$ has level at least $k-1$. The identity
$$
\deriven{k+1}{v}{\nu}=\dfrac{\bd^{k-1}}{\bd\nu^{k-1}}\deriven{2}{v}{\nu}
$$
shows that $\deriven{k+1}{v}{\nu}$ has level at least zero, that is, is constant on $\bd\Omega$, hence $v$ has level at least $k+1$ and the induction step is complete. 

\medskip

We then prove b). As $\Omega$ is analytic, with analytic boundary, and since $v$ is a solution of an elliptic equation with analytic coefficients, we see that $v$ is analytic up to the boundary. We fix a point $y\in\bd\Omega$ and the normal geodesic $\gamma_y: [0,R]\to \Omega$ with $\gamma_y(0)=y$ and initial velocity given by $\nu(y)$. 
The function
$
\psi_y(t)=v(\gamma_y(t))
$
is then analytic on $[0,R)$ and one has:
$$
\psi_y(r)=\sum_{k=0}^{\infty}\dfrac{1}{k!}\deriven{k}v{\nu}(y)r^k=\sum_{k=0}^{\infty}\dfrac{\tilde c_k}{k!}r^k\doteq \psi(r)
$$
As the right-hand side is independent on $y$, the value of $v$ at any point at distance $r$ to the boundary is constant, equal to $\psi(r)$. Hence $v$ is radial.

%%%%

\subsection{Free boundary hypersurfaces are harmonic}\label{mfbi} Let $\Omega$ be a minimal free boundary hypersurface of the unit ball $B^{n+1}$. We choose a unit normal vector $N_{\Omega}$ to $\Omega$ in $\real{n+1}$ and let as usual $\nu$ be the unit normal to $\bd\Omega$ in $\Omega$. Denote the position vector by $x$; this is the radial vector field $x=\sum_{j=1}^{n+1}x_j\dfrac{\bd}{\bd x_j}$. Then, since $\Omega$ meets $\bd B^{n+1}$ orthogonally, we see that $\nu=-x$ on $\bd\Omega$. If $r$ denotes the distance to the origin in $\real{n+1}$, then $x=r\bar\nabla r$, where $\bar\nabla$ is the Levi-Civita connection on $\real{n+1}$.

We want to show that, if $r$ denotes the distance to the origin in $\real{n+1}$ then the function:
$$
f=\dfrac{1}{2n}(1-r^2)
$$
when restricted to $\Omega$, is a solution of
$$
\twosystem
{\Delta f=1\quad\text{on}\quad\Omega}
{f=0, \,\,\derive f{\nu}=\frac 1n\quad\text{on $\bd\Omega$}}
$$
which shows that $\Omega$ is harmonic. Now it is clear that $f=0$ on $\bd\Omega$. Since
$$
\nabla f=-\frac 1n r\nabla r=-\frac 1n x^T,
$$
where $\xi^T$ is the orthogonal projection of $x$ onto $\Omega$, we see that, on $\bd\Omega$:
$$
\derive f{\nu}=\scal{\nabla f}{\nu}=-\frac 1n\scal{x^T}{\nu}=\frac 1n
$$
because on the boundary $x=x^T=-\nu$. It remains to show that $\Delta f=1$. Now 
$\Delta f=-\frac 1n \delta x^T$. Let $\{e_i\}$ be a local orthonormal frame which is $\nabla$-geodesic 
at a given point $x_0$. Then, at $x_0$:
$$
\begin{aligned}
\delta x^T&=-\sum_{i=1}^{n}e_i\cdot\scal{x^T}{e_i}\\
&=-\sum_{i=1}^{n}e_i\cdot\scal{x}{e_i}\\
&=-\sum_{i=1}^{n}\scal{\bar\nabla_{e_i}x}{e_i}-\sum_{i=1}^{n}\scal{x}{\bar\nabla_{e_i}e_i}
\end{aligned}
$$
Now $\bar\nabla_{e_i}x=e_i$ for all $i$; moreover, if $L$ is the second fundamental form, we have
$$
\sum_{i=1}^{n}\bar\nabla_{e_i}e_i=\sum_{i=1}^{n}\nabla_{e_i}e_i+\sum_{i=1}^{n}L(e_i,e_i)=0
$$
because, at the given fixed point,  $\nabla_{e_i}e_i=0$ and, by assumption, $\Omega$ is minimal so that ${\rm tr} L=0$. We conclude that
$\delta x^T=-n$ hence $\Delta f=1$ as asserted.

%%%%

\subsection{Proof of Theorem \ref{min}} In the sphere the result has been proved by Nomizu (\cite{N}) and in the Riemannian case it has been announced (without proof) in \cite{Wan}. A formal proof  was given by Ge and Tang in \cite{GT}. 

\smallskip

Under some conditions, this minimality phenomenon  seems to hold even when there exists a family of constant mean curvature hypersurfaces condensing to a submanifold $P$ in the sense of \cite{MP}: 
then $P$ has to be minimal even when the members of this family are not necessarily parallel, as in Definition \ref{iso}  (see \cite{MP}). 
 
\smallskip

Finally, we sketch a direct argument,  in the language  of this paper. Recall the density function $\theta(r,\nu)$ which gives the Riemannian measure in normal coordinates around $P$: here $r>0$ is the distance to $P$ and $\nu(x)\in U(P)$ ($x$ is the base point). We remark (without proof) that if ${\rm dim} P=k$, then we have an asymptotic expansion, as $r\to 0$:
$$
\theta(r,\nu(x))=r^{n-k-1}\Big(1-k\scal{H(x)}{\nu(x)}r+O(r^2)\Big)
$$
where $H$ is the mean curvature vector of the immersion of $P$ into $\Omega$.
Now, if the tube is isoparametric then $\theta(r,\nu)$ depends only on $r$ and not on the direction $\nu$ : this forces $\scal{H}{\nu}=0$ for all $\nu\in U(P)$, which in turn can hold only when $H=0$ identically. Then $P$ is minimal.

%%
%%%%%%%%%%%%%%%%%%%%%%%%%%%%%%%%%%%

%%%%%%%%%%%%%%%%%%%%%%%%%%%%%%%%%%%%%%%

\end{document}